\documentclass[sn-mathphys,Numbered]{sn-jnl}


\usepackage{graphicx}%
\usepackage{multirow}%
\usepackage{amsmath,amssymb,amsfonts}%
\usepackage{amsthm}%
\usepackage{mathrsfs}%
\usepackage[title]{appendix}%
\usepackage{xcolor}%
\usepackage{textcomp}%
\usepackage{manyfoot}%
\usepackage{booktabs}%
\usepackage{algorithm}%
\usepackage{algorithmicx}%
\usepackage{algpseudocode}%
\usepackage{listings}%



\newtheorem{theorem}{Theorem}
\newtheorem{proposition}[theorem]{Proposition}%

\raggedbottom

\begin{document}

\title[Article Title]{The Pauli group as Galois group of irreducible\newline
pure polynomials over $\mathbb{Q}$}


\author[1]{\fnm{Harald} \sur{Borner}}\email{harald.mtac.borner@gmail.com}

\author[2]{\fnm{Falko} \sur{Lorenz}}\email{lorenz.math@uni-muenster.de}

\affil[1]{\orgname{Bergblick College}, \orgaddress{\street{Im Talblick 9}, \city{Gaiberg}, \postcode{69251},  \country{Germany}}}

\affil[2]{\orgdiv{FB Mathematik}, \orgname{Univ. Münster}, \orgaddress{\street{Waldweg 17}, \city{Münster}, \postcode{48163},  \country{Germany}}}


\abstract{We realize the Pauli group $P$ as Galois group of polynomials over the rational numbers. It is shown by construction that each \textit{pure} polynomial in the infinite family
of the form $X^8+k^2$ for $k\neq \lambda^2, 2\lambda^2; k,\lambda \in \mathbb{Q}^*$ has Galois group $P$ over $\mathbb{Q}$. This form is also proven to be a necessary condition for a realization of $P$ by any \textit{pure} polynomial of degree 8. It automatically provides a realization of the quaternion group $Q_8$ by pure polynomials over a quadratic extension of $\mathbb{Q}$, whereas it is shown to be impossible to realize $Q_8$ over $\mathbb{Q}$ by any pure polynomial of degree 8. We link the results to Witt's criterion for embedding a biquadratic extension into a normal extension with Galois group $Q_8$ via ternary quadratic forms.  
This provides for a connection to the known realizability criteria for embeddings of $E_8=C_2^3$-extensions into one with the Pauli group as Galois group, and the interesting subtleties therein, via the equivalence of certain  quaternion algebras. We thus show how to exactly extend Witt's result of 1936 to an embedding of $E_8$ into (and realization of $Q_8$ within) the Pauli group.}


%

\keywords{Inverse Galois theory, Pauli group, non-abelian binomial, quadratic form, quaternion algebra, quantum physics}



\maketitle
\newpage
\section{Introduction}\label{sec1}

Over the last century, the order-16 Pauli group $P$, structurally presented by a certain semidirect product 
$Q_8 \rtimes C_2$, or by the unique central product $Q_8 \circ C_4$, appeared in an increasingly wide variety of contexts, ranging from spin quantum mechanics and field theory (where it is  isomorphic to the $CPT$ symmetry group of the Dirac equation \cite{bib20}), to quantum information theory \cite{bib1}, and even into mathematical music theory
\cite{bib2}. 
For instance, it plays a major role in the theory of error-correcting codes via the Gottesman-Knill theorem. Generally, logical gates for multiple qubits can be modelled using tensor products of operators in the Pauli group \cite{bib3}\cite{bib13}. 

In abstract algebra, it also plays a unique role, since e.g. it was shown by Minác and Smith \cite{bib15} that the rigidity of fields (with at least 8 square classes) can be characterized  by the condition that the Pauli group does \textit{not}  occur as a Galois group of extensions over them, where a rigid, or C-field, is defined to be a field whose Witt ring is isomorphic to a group ring $\mathbb{Z}/n\mathbb{Z}[G]$, for some $n \ge 0$ and some group $G$ of involutions.

Correspondingly, many attributes of the Pauli group are well-known and widely shared among experts in algebra and physics. However, this does not mean that no surprising aspects remain to be uncovered. We shall address here one of these aspects, pertaining to its role as a Galois group over $\mathbb{Q}$, and we shall add another unique algebraic characterisation of the Pauli group (at the end of chapter 4).

While it is known, due to the results of Scholz, Shafarevich et al \cite{bib4}, that some polynomial in $\mathbb{Z}[X] \subset \mathbb{Q}[X]$ can be found that generates a splitting field $E$ as an extension over $\mathbb{Q}$, with the solvable finite group $P$ as its Galois group, it is far from obvious that one (or more) such polynomial can be discovered with a certain "simple" structure, eg. of minimal degree and limited to a minimum number of monomials.

For the Pauli group, computer algebra codes typically generate solutions to the first problem, like $x^8-2x^7-7x^6+16x^5+4x^4-18x^3+2x^2+4x-1$ \cite{bib5}. Furthermore, it has been proven \cite{bib10}\cite{bib11} that (elaborate) generic polynomials exist for a $P$-extension over $\mathbb{Q}$.

Hence the question still arises if, and if so which, "simpler" polynomials exist in the above sense, which offer much clearer insights into the structure of the needed extensions over $\mathbb{Q}$, in particular also for all intermediate fields. We shall prove by elementary means (and without computer algebra) that this is indeed possible, and furthermore, we have found an infinite family of solutions that even resides within the simplest possible type, namely irreducible \textit{pure} polynomials (irreducible \textit{binomials}),  of degree 8, in $\mathbb{Q}[X] $. 

Simplest infinite families of polynomials in this sense have been considered before for smaller groups, e.g. the simplest dihedral octic polynomials $X^8+(k^2+2)X^4+1$ with Galois group $D_8 = Dih_4$ of order 8, by Spearman and Williams in 2008 \cite{bib22}.
\newpage
In a broader context, irreducible binomials and radical extension fields have been investigated in the 1990s by several authors, including  E. Jacobson and W. Velez \cite{bib24}.
For irreducible binomials of degree $n$ over $\mathbb{Q}$, they characterized their Galois groups as so-called \textit{full} subgroups of a certain group 
$Hol(C_n)$ of order  $n \phi(n)$,
where the holomorph $Hol(C_n)$ is defined as the semidirect product $C_n \rtimes Aut(C_n)$, and a subgroup $G$ of this group is called \textit{full} if the projections of $G$ on the first and second factor are both surjective (but not necessarily homomorphisms). This amounts to a complete solution of this inverse Galois problem for radical extensions of the rationals, in a certain sense: if we determined exactly which of the subgroups of $Hol(C_n)$ are full, then we know the \textit{set} of Galois groups appearing for a fixed $n$.

However, the explicit isomorphism types of the occurring Galois groups for irreducible \textit{pure} polynomials of a given degree $n$, $f_c = X^n+c$, as well as their exact dependence on $c$, are intricate to extract from these results. 
On the contrary, we are aiming here at a direct characterisation of the family of those pure polynomials of degree 8 which give rise to the Pauli group as their Galois group over $\mathbb{Q}$.
We were able to check the compatibility of our work with these earlier results, and make some comments on the connection to the broader question which Galois groups can arise for \textit{pure} polynomials in chapter 4.

In chapters 2 and 4, we prove the main propositions that the irreducible polynomials $f_k = X^8 + k^2 \in \mathbb{Q}[X]$ have Galois group $P$ over $\mathbb{Q}$ \textit{if and only if} $k$ is not a square 
in $\mathbb{Q}$.
At the same time, for infinitely many values of $k$ the splitting fields of $f_k$ are distinct.
We also prove the fact that $Q_8$ cannot be realized by any  pure octic polynomial over $\mathbb{Q}$, but that it can be over a quadratic extension of $\mathbb{Q}$, as a natural realization of a $Q_8$- within a $P$-field extension over $\mathbb{Q}$. 
This fact has also been the foundation of earlier results by Lemmermeyer \cite{bib23}
on how to extract information about unramified quaternion extensions of quadratic number fields over $\mathbb{Q}$.
He also detailed the fixed fields of the 15 normal subgroups (out of 21 proper subgroups) of $P$ in terms of the discriminants of the fixed fields of its three $D_8$ subgroups, proving that these must be relatively prime.

As an intermezzo, chapter 3 connects these results to previous work into the embedding of $V_4$- into $Q_8$-extensions, leading back to Ernst Witt's classical paper in 1936. Chapter 4 also connects to the well-known general theorem of Schinzel on when a pure binomial has an \textit{abelian} Galois group, as well as to the aforementioned results about radical extensions of $\mathbb{Q}$ in general. Finally, in chapter 5 we link our results to those about embeddings of a triquadratic extension into a  $P$-extension, showing how to avoid some pitfalls that are due to the intricate nature of the connection to quaternion algebras and ternary quadratic forms.

\section{Construction of $P$ as a Galois group}\label{sec2}

Here we show explicitly how to construct an infinite family of normal $\mathbb{Q}$-extensions such that for each of them their corresponding group of automorphisms is isomorphic to the Pauli group. 
The Pauli group $P$ derives its name from its extensive use in physics, a 2-dimensional representation being generated by the three so-called Pauli matrices $X,Y,Z \in M_2(\mathbb{C})$, 
\begin{equation}
X = \begin{bmatrix}0 & 1 \\ 1 & 0\end{bmatrix}, 
Y = \begin{bmatrix}0 & -i \\ i & 0\end{bmatrix}, 
Z = \begin{bmatrix}1 & 0 \\ 0 & -1\end{bmatrix} \nonumber
\end{equation}   
involutory and Hermitian (hence unitary).
We have $iX=YZ=-ZY$, $iY=ZX=-XZ$, $iZ=XY=-YX$ (with $-E = [X,Y]= XYX^{-1}Y^{-1}$ and $iE=XYZ$), such that 
$N := \{\pm E, \pm iX, \pm iY, \pm iZ\}$
is a subgroup of the Pauli group $P$
isomorphic to the quaternion group $Q_8$.
Together with the coset 
$XN = \{\pm X, \pm iE, \pm Z, \pm Y\}$,  evidently it makes up all of $P$, which therefore has order 16 and appears as a semidirect product of the normal (index 2) subgroup $N$ with the cyclic group $H:=\langle X\rangle$ of order 2 (the action of $H$ on $N$ is given by $(iX)^X = iX, (iY)^X = XiYX= -iY = (iY)^{-1}$). 
We also note: the elements of $P$ have orders 1,2 or 4 only,
and the center $Z(P)$ of $P$ is given by $Z(P)=\langle iE\rangle$, and
therefore of type $C_4$. 
For further algebraic presentations of $P$, as well as a remark on other usages of the name Pauli group, different from the classical one we use, we refer the reader to the appendix.\newline

We begin by recalling the fact that a pure polynomial of the form $X^n+k^2$ with $k^2 \in \mathbb{Q}^*$ is irreducible over $\mathbb{Q}$ if and only if two conditions are fulfilled \cite{bib6}:\newline

(a) there is no prime factor $q$ of $n$ such that $-k^2$ is a $q$-th power in $\mathbb{Q}$

(b) if $n$ is divisible by 4, there is no $\lambda \in \mathbb{Q}$ such that $-k^2 = -4\lambda^4$\newline

For $n=8$ ie. $q=2$, (a) is trivially respected over $\mathbb{Q}$, and (b) gives the condition $k \neq \pm 2\lambda^2$.
We may now further suppose $k>0$ and we also have to require $k$ not to be a square in $\mathbb{Q}$ (since it can easily be seen that otherwise our construction would definitely fail, leading to a Galois group of order 8).
These constraints will give us the sufficient condition on $k$ so that the $\mathbb{Q}$-extensions for the family of polynomials $f_k := X^8+k^2$ yield the desired Galois group isomorphic to $P$: 

\begin{equation}
 k \neq \lambda^2,2\lambda^2;  
0 < k, \lambda \in \mathbb{Q}^*
\end{equation}
\newline
We note that (1) is at the same time exactly the condition for a pure quartic polynomial $X^4+k^2$ to be irreducible, to have $V_4$ as Galois group \textit{and} to possess a splitting field $F$
\begin{equation}
    F = \mathbb{Q}(w\sqrt{k}), w^2 = i
\end{equation}
which is \textit{dependent}  on $k$, 
like also the $k$-dependent quadratic intermediate fields 
$\mathbb{Q}(\sqrt{k/2})=\mathbb{Q}(\sqrt{2k})$ and $\mathbb{Q}(i\sqrt{2k})$, alongside $\mathbb{Q}(i)$. For all $k$ that are squares in $\mathbb{Q}$, the splitting field $F$ is the $k$-independent $\mathbb{Q}(w)$. Hence (1) is related to the fact that we need seven quadratically independent quadratic extensions of $\mathbb{Q}$ to produce seven different biquadratic $V_4$-extensions, only thus allowing us to build a triquadratic $E_8 = C_2^3$-extension, which is the maximal abelian subextenion of the Pauli extension: $P/C_2 \cong E_8$, for the polynomials $f_k$, as detailed below.
\newpage

We shall use the following notation, reserving $a$ for
the "first" root of $f_k =X^8+k^2$ : 
\begin{equation}
a := zv
\end{equation}
with:
\begin{equation}
v^2 := +\sqrt{k}  \notin \mathbb{Q} , v^4 =k \in \mathbb{Q}, v > 0 \nonumber
\end{equation}
\begin{equation}
z  := \exp{(2 \pi i/16)} = (R_+ + iR_-)/2 = \zeta _{16} \nonumber
\end{equation}
\begin{equation}
w := \exp(2 \pi i/8) = z^2  
    = (1+i)r/2 =  \zeta _8\nonumber
\end{equation}
\begin{equation}
r  := +\sqrt{2} = R_+R_- = w+\Bar{w}, rv^2 = \sqrt{2k} \nonumber
\end{equation}
\begin{equation}
R_\pm := +\sqrt{(2 \pm r)}, R_+ = z+\Bar{z}, vR_+ = a+\Bar{a},  \nonumber
\end{equation}
\begin{equation}
 viR_- = v(z-\Bar{z}) = a-\Bar{a} 
= i\sqrt{2\sqrt{k}-\sqrt{2k}} \nonumber
\end{equation}
\newline
For an octagon inscribed into the unit circle, $R_- = (z-\Bar{z})/i$ is the length of an edge,  and $r$ (resp. $R_+ $)  the length of a secant missing one  corner (resp. two corners). 

The condition on $v^2$ is equivalent to requiring $k$ not to be a square in $\mathbb{Q}$, ie. $k^2 \neq \lambda ^4$ for any $\lambda \in \mathbb{Q}$. For instance, all numbers  $k = p^{2\nu +1}$ for an odd prime $p \geq 3$ and integer $\nu \geq 0$ fulfil the conditions (1) 
For definiteness, the reader may anchor on the two simplest cases, $f_3 = X^8+9$ or $f_5 = X^8+25$ and 
for guidance, we refer to Fig. 1. To our knowledge this is the first complete depiction of the matching of three lattices: the subgroup structure of $P$, the 
$\mathbb{Q}$-extensions fixed by (automorphism groups isomorphic to) those subgroups, and the corresponding generators of these subgroups, drawn from the elements of $P$. It renders explicit, using the numbers $i,r,v^2$ from (3), the contents of table 1 in \cite{bib23}.

\begin{figure}[!ht]
\centering
\includegraphics[width=.99\linewidth]{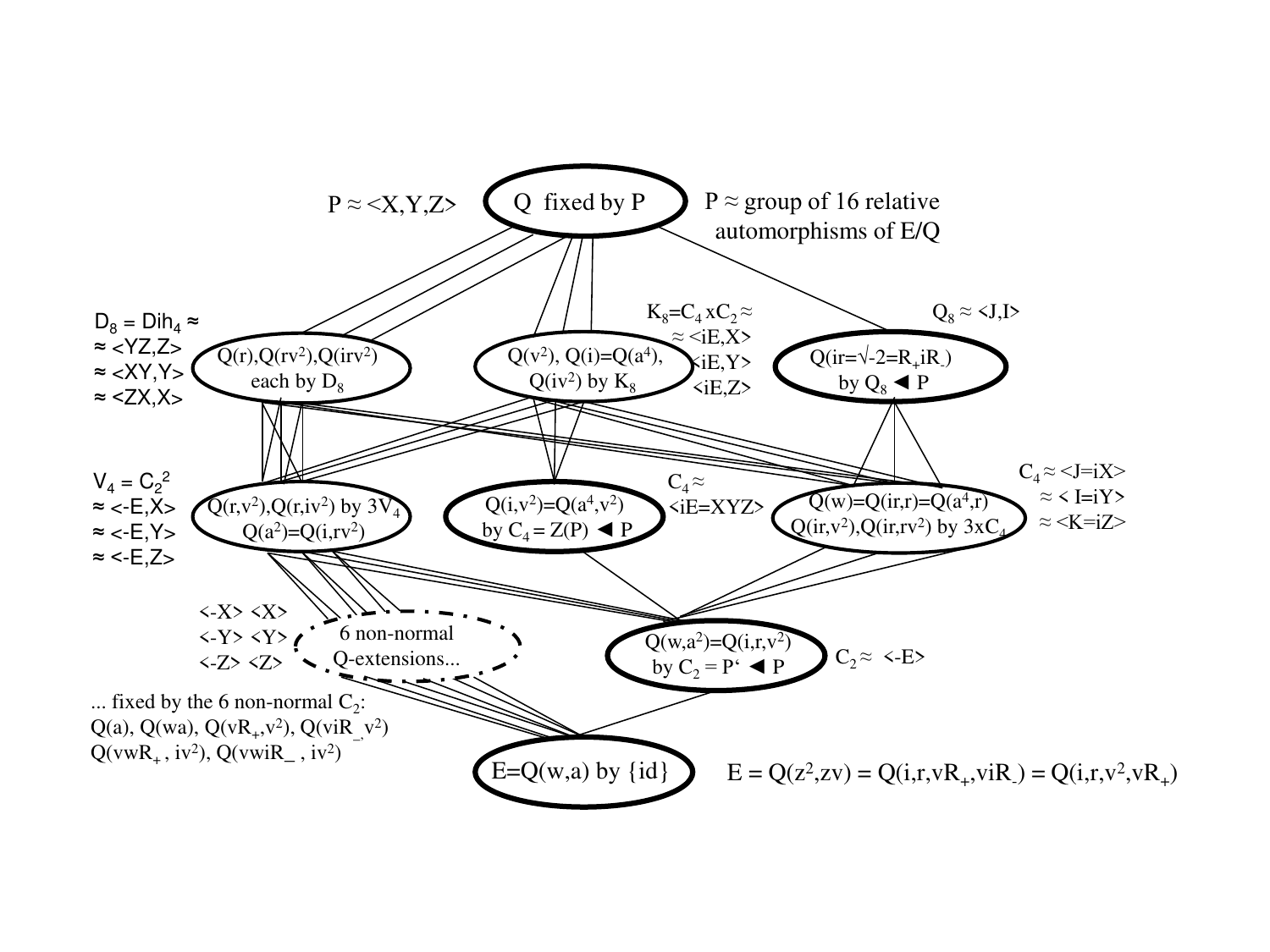}
\caption{$\mathbb{Q}$-extensions fixed by subgroups of the Pauli group, as well as their subgroup generators in terms of the Pauli matrices $X,Y,Z$ (relations understood). This is the completion of earlier fragments of $P$ as given e.g. in \cite{bib12}. We refer to (3) for $r=\sqrt{2},v^2=\sqrt{k}$ and other notations. }\label{fig1}
\end{figure}

To summarize, $k \neq 2\lambda^2$ arises due to requiring $f_k$ to be irreducible over $\mathbb{Q}$, and $k \neq \lambda^2 $  due to our aim to prove that the Galois group of $f_k$ over $\mathbb{Q}$ yields $P$, and no other group (which, incidentally, for the polynomial $X^8 + \lambda^4$ would have to be of order 8, since 
$\mathbb{Q}(a^2)  = \mathbb{Q}(w)$,
and furthermore abelian, according to Schinzel's result in section 4).
The roots of $f_k$ are $aw^m, m=0,...,7$.
Using these definitions, we now turn to our \newline

\begin{proposition}
The extension $E/\mathbb{Q}$ given by the splitting field $E=\mathbb{Q}(a,w)$ of any pure polynomial $f_k = X^8+k^2$ with $k$ as in (1) has the Pauli group $P$ as its Galois group.
Thus the subgroup lattice of the Pauli group $P$ mirrors (one-to-one, and order inverting) 
the lattice of intermediate fields of the extension $E/\mathbb{Q}$. 
\end{proposition}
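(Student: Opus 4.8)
The strategy is to show directly that $G := \mathrm{Gal}(E/\mathbb{Q})$ with $E = \mathbb{Q}(a,w)$ has order $16$ and is isomorphic to the semidirect product $Q_8 \rtimes C_2$ that defines $P$. First I would pin down $[E:\mathbb{Q}]$ by a tower argument: since $f_k$ is irreducible of degree $8$ (conditions (a),(b) with $k \neq 2\lambda^2$), we have $[\mathbb{Q}(a):\mathbb{Q}] = 8$; the roots of $f_k$ are $aw^m$, so $E = \mathbb{Q}(a,w)$ and it suffices to show $w \notin \mathbb{Q}(a)$, whence $[E:\mathbb{Q}] = 16$. Here I would use that $\mathbb{Q}(a)$ is a degree-$8$ field with a real embedding (take $a = zv$ and note $a^2 = wv^2$ so $a^2$ is not real, but one can instead argue via the intermediate field $\mathbb{Q}(a^4) = \mathbb{Q}(\sqrt{-k^2}) = \mathbb{Q}(i)$ hmm — better: $\mathbb{Q}(a^2) = \mathbb{Q}(wv^2)$ and $\mathbb{Q}(a^4) = \mathbb{Q}(i)$), and that the condition $k \neq \lambda^2$ is exactly what forces the seven quadratic subfields $\mathbb{Q}(i), \mathbb{Q}(\sqrt{2k}), \mathbb{Q}(i\sqrt{2k}), \mathbb{Q}(\sqrt{k}), \dots$ to be distinct, so that $w = \zeta_8$ (which generates $\mathbb{Q}(i,\sqrt 2)$) cannot already lie in $\mathbb{Q}(a)$ — this is the content flagged in the discussion around equations (1)–(2), namely that the splitting field $F = \mathbb{Q}(w\sqrt k)$ of $X^4 + k^2$ is genuinely $k$-dependent precisely when $k$ is not a square.

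**Identifying the group.** Once $|G| = 16$ is established, I would exhibit explicit automorphisms and compute relations. An automorphism $\sigma$ is determined by its action on the two generators $a$ and $w$: it must send $a \mapsto a w^m$ for some $m \in \{0,\dots,7\}$ and $w \mapsto w^j$ for $j \in \{1,3,5,7\}$, but not every such pair extends. I would single out two specific generators — e.g. an order-$4$ (or order-$8$?) element coming from $a \mapsto aw$, $w \mapsto w$, call it $\tau$, and a complex-conjugation-type element $\rho: a \mapsto \bar a$, $w \mapsto \bar w = w^{-1}$ (well-defined since $E$ is closed under conjugation, being a splitting field of a rational polynomial) — and then compute the orders of $\tau$, $\rho$, and their commutator, matching them against the presentation $P = \langle N, X : \ldots \rangle$ with $N \cong Q_8$ recalled in the text. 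The key identities $vR_+ = a + \bar a$, $viR_- = a - \bar a$, $r = w + \bar w$ from display (3) let me verify that these maps are consistent (they preserve the defining polynomials of $v$, $r$, $R_\pm$) and let me read off, for instance, that conjugation has order $2$ and inverts the appropriate $Q_8$-generators, reproducing the action $(iY)^X = (iY)^{-1}$ noted earlier. Since there are few groups of order $16$, showing $G$ has an element of order $4$ generating the center, no element of order $8$, and a non-central $C_2$ acting by inversion on a $Q_8$ pins down $G \cong P$; alternatively one identifies the unique normal $Q_8$ (fixing $\mathbb{Q}(\sqrt 2)$, say) and the complement.

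**The one-to-one order-reversing correspondence.** The final sentence is then immediate from the Fundamental Theorem of Galois Theory: $E/\mathbb{Q}$ is Galois (it is the splitting field of the separable polynomial $f_k$), so subgroups of $G = \mathrm{Gal}(E/\mathbb{Q}) \cong P$ correspond bijectively and order-reversingly to intermediate fields of $E/\mathbb{Q}$, with normal subgroups matching Galois (normal) subextensions. So once the proposition's first assertion is proved, the second is just an invocation of the Galois correspondence, and the figure merely makes the bijection explicit.

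**Main obstacle.** The crux is not the group-theoretic bookkeeping but establishing $[E:\mathbb{Q}] = 16$, i.e. that $w \notin \mathbb{Q}(a)$, and equivalently that the relevant quadratic classes are independent in $\mathbb{Q}^*/(\mathbb{Q}^*)^2$; this is exactly where the hypothesis $k \neq \lambda^2$ (and $k \neq 2\lambda^2$) is consumed, and getting it cleanly — rather than by brute-force resultants — requires care with which subfields of $\mathbb{Q}(a)$ are forced and a square-class argument showing $\sqrt 2 \notin \mathbb{Q}(a)$ would fail otherwise. Everything after that is checking that a handful of candidate field-automorphisms are well defined and reading off their multiplication table.
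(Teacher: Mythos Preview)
Your route differs from the paper's. Rather than exhibiting explicit generators of $G$ and checking relations, the paper argues by classification: among the fourteen groups of order $16$, $P$ is the unique one satisfying (a) no element of order $8$, (b) a non-normal subgroup exists, (c) a $Q_8$ subgroup exists. Property (a) falls out of the tower $\mathbb{Q}\subset L=\mathbb{Q}(i,\sqrt2,\sqrt k)\subset E$: since $\mathrm{Gal}(L/\mathbb{Q})\cong C_2^3$, every $t\in G$ has $t^2\in\mathrm{Gal}(E/L)$, hence $t^4=1$. Property (b) holds because $\mathbb{Q}(a)/\mathbb{Q}$ is non-normal. Property (c) is the core: the paper shows $H=\mathrm{Gal}(E/\mathbb{Q}(\sqrt{-2}))\cong Q_8$ by proving that each of its three index-$2$ subgroups $\mathrm{Gal}(E/K_j)$, for $K_1=\mathbb{Q}(ir,r)$, $K_2=\mathbb{Q}(ir,v^2)$, $K_3=\mathbb{Q}(ir,rv^2)$, is cyclic $C_4$ (a case analysis ruling out extra involutions, using that none of the $K_j$ contains $a^2=wv^2$). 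This sidesteps all the well-definedness checks you anticipate. The degree-$16$ step is also done via $L$: if $E=L$ then $\mathbb{Q}(a)=L$, so $[\mathbb{Q}(a):\mathbb{Q}(w)]=2$ forces $a^2\in\mathbb{Q}(w)$, i.e.\ $\sqrt k\in\mathbb{Q}(\zeta_8)$, contradicting~(1).

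Your direct approach can be made to work, but as written it has a concrete error: the map $\tau:a\mapsto aw,\ w\mapsto w$ is \emph{not} an automorphism. From $a^4=ik$ one gets $\tau(a^4)=(aw)^4=a^4w^4=-ik$, so $\tau(i)=-i$; but $i=w^2$ and $\tau(w)=w$ force $\tau(i)=i$. Thus $a\mapsto aw$ must be paired with $w\mapsto w^3$ or $w\mapsto w^7$, and the resulting order computations change accordingly. Relatedly, your guess that the $Q_8$ fixes $\mathbb{Q}(\sqrt2)$ is wrong: since $P$ contains a \emph{unique} $Q_8$, and the paper identifies it as $\mathrm{Gal}(E/\mathbb{Q}(\sqrt{-2}))$, the fixed field is $\mathbb{Q}(ir)$, not $\mathbb{Q}(r)$. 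None of this is fatal to your strategy, but it shows that the ``bookkeeping'' you dismiss as routine is exactly where the argument bites; the paper's classification approach trades those consistency checks for a short case analysis on the $K_j$.
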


\begin{proof}
Let $G$ be the Galois group $Gal(E/\mathbb{Q})$, where
the splitting field $E=\mathbb{Q}(a,w)$ of $f_k$ contains, for fixed $k$, the intermediate field
$L:= \mathbb{Q}(w,a^2) = \mathbb{Q}(i,r,v^2) = \mathbb{Q}(ir,r,v^2)$, with $a^2 = wv^2$ and $\mathbb{Q}(w)=\mathbb{Q}(i,r)$. 
Since $f_k$ is irreducible, 
$\mathbb{Q}(a)/\mathbb{Q}$ is of degree 8. 
$L/\mathbb{Q}$ is also of degree 8, a triquadratic Galois extension with well-known Galois group $E_8 = C_2^3$. 
In particular, for any element $t\in G$,  
$t^2$ acts trivially on $L$. Hence $t^2 \in Gal(E/L)$, which has order equal to
 the degree of $E/L$, ie. at most 2, so it follows $t^4=1$, and thus $G$ has no element of order 8.\newpage
 
After proving $E \neq L$, we can assert that:
\newline 
- $[E:L] = 2$ and $[E:\mathbb{Q}] = 16$, \newline
- $w \notin \mathbb{Q}(a)$,  \newline
- $\mathbb{Q}(a)/\mathbb{Q}$ is not a normal extension. \newline

But assuming that the splitting field was $E=L$, the equality of the two degree 8 extension fields $\mathbb{Q}(a) = L = \mathbb{Q}(w,a^2)$ would follow. Thus, like $L/\mathbb{Q}(w)$, the extension $\mathbb{Q}(a)/\mathbb{Q}(w)$ would be a quadratic extension. Hence we would find that $a^2 \in Q(w)$, the field of the 8th roots of unity, and thus $\mathbb{Q}(w)=\mathbb{Q}(w,a^2=w\sqrt{k})$, a contradiction. \newline


Out of the 14 groups of order 16, the Pauli group $P=Q_8 \rtimes _2 C_2$, among the 9 non-abelian groups \cite{bib5}, is the only one with exactly one subgroup $Q_8$.

The only three other groups of order 16 with one or more $Q_8$ as  subgroups are: \newline
- $Q_{16}$, the generalised quaternion group of order 16, \newline
- $SD_{16} = QD_{16} = Q_8 \rtimes C_2$, the semi- or quasi-dihedral group of order 16, \newline 
- $Q_8 \times C_2$.\newline 
Among these, the first two have a cyclic subgroup $C_8$ (and lack a factor group $E_8$), in contrast to $P$ , whereas the last 
has normal subgroups only (as opposed to $P$).\newline

Thus to show a group $G$ of order 16 to be isomorphic to the Pauli group, it suffices to demonstrate that simultaneously:\newline
a) $G$ has no element of order 8 \newline 
b) $G$ has non-normal subgroups\newline
c) $G$ possesses a subgroup of type $Q_8$. \newline

We address these conditions in turn, showing that the Galois group 
$G = Gal(E/\mathbb{Q})$ with $E$ as the splitting field of  $f_k$ over $\mathbb{Q}$, satisfies them for any k respecting (1):\newline

ad a) This has already been ascertained at the beginning of the proof.
\newline

ad b) $Gal(E/\mathbb{Q}(a))$ is a non-normal subgroup of $G=Gal(E/\mathbb{Q})$, since we proved above that the extension
$\mathbb{Q}(a)/\mathbb{Q}$ is not normal. The same holds for
$\mathbb{Q}(wa)/\mathbb{Q}$. \newline

 
ad c) We consider the fixgroup $H$ of the subfield $\mathbb{Q}(ir)$, so $H = G(E/\mathbb{Q}(ir))$. Since $\mathbb{Q}(ir)/\mathbb{Q}$ is of degree 2, $H$ is a subgroup of order 8. The extension 
$E/\mathbb{Q}(ir)$ contains the intermediate fields 
$K_{j=1,2,3} =\mathbb{Q}(ir,r), \mathbb{Q}(ir,v^2), \mathbb{Q}(ir,rv^2)$ which are easily shown to be distinct. 
\newline
We start by observing that none of the fields $K_j$ contains $a^2=wv^2$.
It suffices to show that the groups $G(E/K_j)$ of order 4 are all cyclic, because the quaternion group $Q_8$ is the only group of order 8 containing three different normal subgroups of type $C_4$.

Now, assuming to the contrary that at least for one $j$, ie. for one of the groups $G(E/K_j)$ we have: 
for all $s  \in G(E/K_j): s^2 = 1$.

Then for each  $s \in G(E/K_j)$, we would have  $sa = ca$ with an 8th root of unity  $c = c(s)$. 
Hence $s^2 =1$  gives $cs(c) =1$, ie. $s(c) = c^{-1}$.

For every $j$, $s  \in G(E/K_j)$  fixes the element $ir$, so that on the 8th roots of unity, either $s$ acts trivially, or by exponeniating by 3.

In particular, it follows that $c^4 = 1$ for the $c = c(s)$ from above. Because of $s(a^2) = (ca)^2 = c^2 a^2$ we find, honoring the initial remark, an $s$ with
\begin{equation}
       c(s)^2 = -1.
\end{equation}

Thus for this $s$ we get:
$sa  =  \pm ia$. Hence:
\newline

Case $j=1$:   here $ w \in K_1$ and we have   
$c  = s(c) = c^{-1}$, a contradiction to (4).
\newline

Case $j=2$:  here we have \newline
$s(a^2) = s(wv^2) = s(w) s(v^2) = s(w) v^2 = w^3 v^2 = w^2 wv^2 = w^2 a^2 = i a^2$,  \newline
a contradiction to $s(a^2) = c(s)^2
a^2$  and (4). 
(The alternative $s(w) = w$ is excluded because of (4))\newline

Case $j=3$: finally, here we observe 
$s(a^2) = s(wv^2) = s(w) s(v^2) = w^3 s(v^2) = w^3
s(r^{-1}) s(rv^2) = w^3 s(r)^{-1} rv^2 = w^3 (-r)^{-1} r v^2 =  -w^3 v^2 = -w^2 wv^2 = -i a^2$, again a contradiction to (4) and to the line above it.
\newline

To summarise, this proves that $H=Q_8$, as required.
The case for $K_1=\mathbb{Q}(w)$ can independently be derived from Kummer theory and the existence of the primitive 8th root of unity $w \in K_1$, requiring $E/K_1$ to be cyclic of degree 4 (since $a^4 \in \mathbb{Q}(w)$, but  $a^2$ is not). 
\end{proof}

As to other intermediate fields of the extension 
$E/\mathbb{Q} =\mathbb{Q}(w,a)/\mathbb{Q}$, fixed by groups isomorphic to $C_4$, we can assert the following: the extension 
$E/\mathbb{Q}(i) =\mathbb{Q}(a,v^2)/\mathbb{Q}(i)$
results from the Galois extension
$\mathbb{Q}(i,v^2)/\mathbb{Q}(i) $
by composition with 
$\mathbb{Q}(a,i)=\mathbb{Q}(a)$.
Since $\mathbb{Q}(i,v^2)/\mathbb{Q}(i) $ is of degree 2,
the intersection of $\mathbb{Q}(i,v^2) $ with $\mathbb{Q}(a) $
can only be $\mathbb{Q}(i) $.
According to the translation theorem of Galois theory, we have thus:
\begin{equation}
 Gal(E/\mathbb{Q}(i)) = 
 Gal(\mathbb{Q}(a,v^2)/\mathbb{Q}(i,v^2)) \times
 Gal(\mathbb{Q}(a,v^2)/\mathbb{Q}(a))
\end{equation}

As $a^2=wv^2$, $a^2$ cannot be an element of $\mathbb{Q}(i,v^2)$, hence the first factor  (as group of order 4) is isomorphic to $C_4$, and the second of type $C_2$.
(In passing, it also follows that 
$\mathbb{Q}(a)/\mathbb{Q}(i)$
is of type $C_4$).

We had shown that the subgroup 
$Gal(E/\mathbb{Q}(ir)) $ is of type $Q_8$.
This makes up for three (out of the four) subgroups of type $C_4$  in  $P$, none of these three equal  to the center $Z(P)$. Therefore the subgroup  
$Gal(\mathbb{Q}(a,v^2)/\mathbb{Q}(i,v^2)$,
 proven above of type $C_4$, and certainly not contained in 
 $Gal(E/\mathbb{Q}(ir))\cong Q_8$, must be the center $Z(P)$:

\begin{equation}
 Gal(E/\mathbb{Q}(i,v^2)) = 
 Gal(\mathbb{Q}(a,v^2)/\mathbb{Q}(i,v^2) \cong Z(P) \cong C_4
\end{equation}
\newline
For the other subgroups of $P$ and their corresponding fixfields, we refer the reader to Fig.2, where the 7 quadratic and 7 biquadratic intermediate fields of $L=\mathbb{Q}(ir,r,v^2)=\mathbb{Q}(w, w\sqrt{k})$ are analogous to those of any standard triquadratic $\mathbb{Q}$-extension, producing a Galois group $Gal(L/\mathbb{Q}) \cong E_8$. These 14 subfields correspond to the number of 2- (resp. 1-) dimensional subspaces of the 3-dimensional $C_2-$vector space $C_2^3$.

$L/\mathbb{Q}$ is the only normal extension of degree 8.
The other 4 of 6 (non-normal) degree-8-extensions can be inferred in a similar way as we inferred $\mathbb{Q}(a)$ and $\mathbb{Q}(wa)$, and are explicitly given in Fig.2.\newline
Thus we have arrived at a constructive proof, that for any $k$ fulfilling (1), all members of the infinite family of irreducible non-abelian binomials $f_k = X^8+k^2$ are the simplest (ie. pure) polynomials that lead to the Pauli-group as their Galois group. The context of $P$ appearing for $n=8$ is further explored in section 3 and 4.

\begin{figure}[!ht]
\centering
\includegraphics[width=1.1\linewidth]{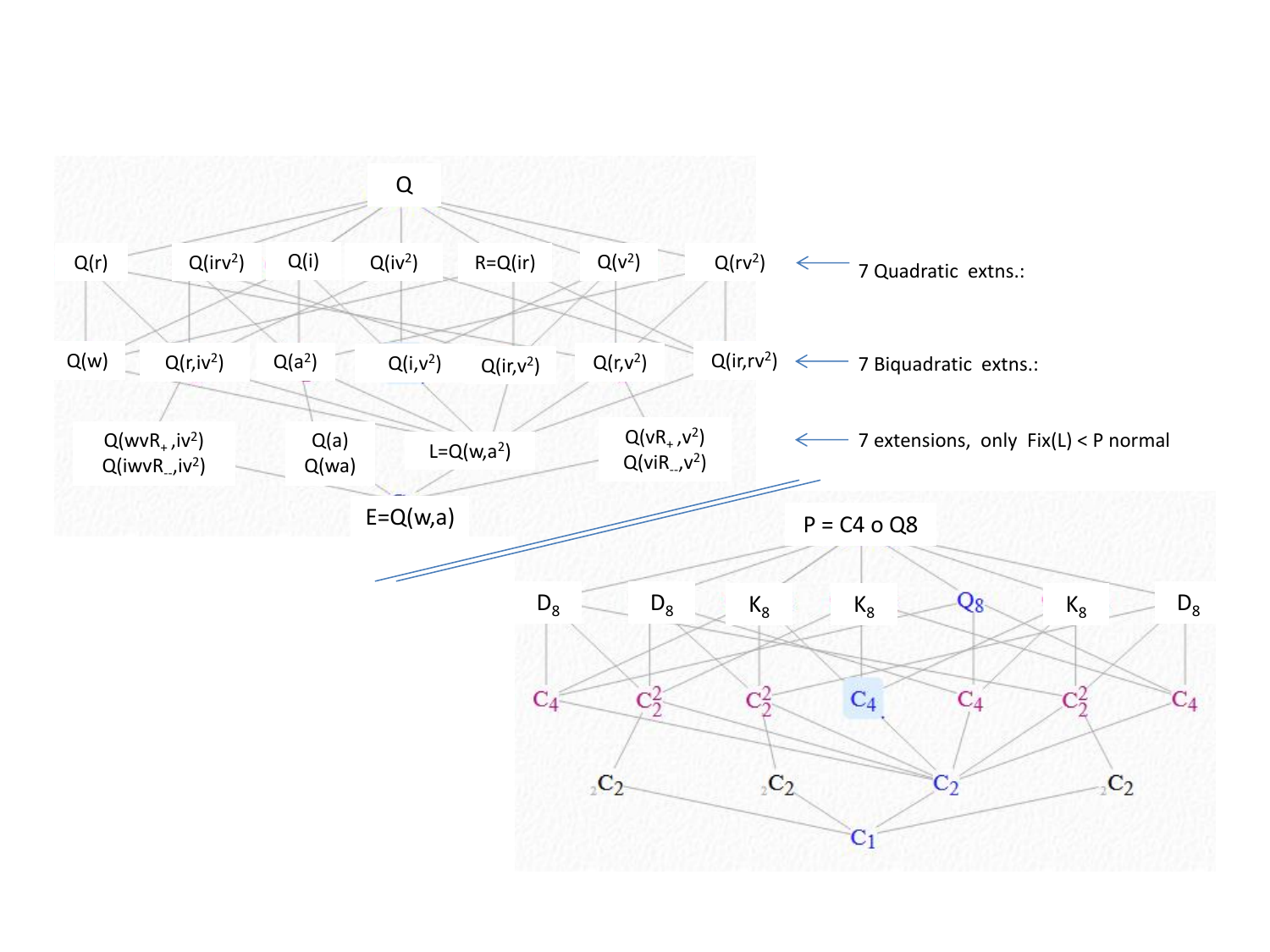}
\caption{Matching of intermediate fields to subgroups of $P$. Note the breaking of the horizontal symmetry, due to $P/C_2 \cong E_8=C_2^3$, among the subgroups of $P$ of order 4 and 8, when this figure is compared to Fig.1. We refer to (3) for $r=\sqrt{2}, v^2=\sqrt{k}, w^2=i$ and other notations.}\label{fig2}
\end{figure}
As we have seen, the single $C_2$ subgroup (out of 7 such) of $P$ that is normal, its commutator $P'$, gives rise to the factor group $P/P' = E_8 = C_2^3$, elementary, ie. of exponent 2, and corresponds to the intermediate field $L = \mathbb{Q}(w,a^2) = \mathbb{Q}(w,w\sqrt{k})$.
The last and final quadratic extension $E/L=\mathbb{Q}(w,a) / L$  over $L$ thus can be interpreted as 
breaking the complete "horizontal" symmetry between the seven (bi)quadratic extension fields generated by 
one (or two) elements out of
$\{i,r,v^2\} = \{i,\sqrt{2},\sqrt{k}\}$, a symmetry that is evidenced in the well-known symmetric subgroup lattice pertaining to the elementary group $E_8$, as visible in Fig 2, as opposed to Fig. 1 which is the more lucid one for revealing the structure of $P$. 


\section{Relation to $Q_8$ extensions}\label{sec3}
  
Since the occurence of a Galois subgroup
$Q_8 \cong Gal(E/\mathbb{Q}(ir))$ played such a central role in our proof,
as well as for the general context of constructing quaternion group extensions over $\mathbb{Q}$, and over extensions of $\mathbb{Q}$, it is intriguing to question what extensions $E/R$ over a ground field $R$ of characteristic 0 (or at least $\neq 2$), of a given biquadratic intermediate field extension $L/R$ (with Galois group isomorphic to $V_4$, in our case $R=\mathbb{Q}(ir)$), give rise to a  Galois group $Q_8$. As is well known, in 1936 E. Witt solved this problem by methods using quadratic forms and crossed products \cite{bib7}. Specialising his theorem to our case, we have: \newline

\begin{proposition}[Witt 1936]
Let $L=R(\sqrt{a_1},\sqrt{a_2},\sqrt{a_3}=\sqrt{(a_1a_2)^{-1}})$ be a biquadratic extension over a ground field $R$ with $Gal(L/R)=V_4$. Then 
$L/R$ can be embedded into a quaternion extension $F/R$ 
if and only if the quadratic forms 
$[a_1,a_2,a_3 ] \cong  [1,1,1]$ are equivalent over the ground field $R$. \newline
Furthermore, if $T = (t_{ij})$ is a matrix with  $det(T)=1$
which transfers $[a_1,a_2,a_3 ]$ into $[1,1,1]$, then all the quaternion extensions $F/R$ containing $L$ are obtained by $F=R(\sqrt{\rho \beta})$ with
$\beta = 1+t_{11}\sqrt{a_1}+t_{22}\sqrt{a_2}+t_{33}\sqrt{a_3}$ and a certain non-zero element $\rho$ of the ground field $R$ (and vice versa). \newline
\end{proposition}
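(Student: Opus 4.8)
The plan is to recast the statement as the solvability, and parametrization of all solutions, of the embedding problem attached to the central extension $1 \to C_2 \to Q_8 \to V_4 \to 1$, where the surjection $\mathrm{Gal}(\bar R/R) \twoheadrightarrow V_4$ is the one cutting out $L/R$. Since the kernel $C_2$ is central, the obstruction to lifting this surjection to $Q_8$ is a single class in $H^2(\mathrm{Gal}(\bar R/R),C_2) = {}_2\mathrm{Br}(R)$, obtained by inflating the extension class of $Q_8 \to V_4$; and because no proper subgroup of $Q_8$ maps onto $V_4$, any lift that exists is automatically surjective, so the embedding problem is solvable exactly when this obstruction vanishes. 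I would first write $L = R(\sqrt{a_1},\sqrt{a_2})$ (noting $a_3 \equiv (a_1a_2)^{-1} \equiv a_1a_2$ modulo squares), identify $H^1(\mathrm{Gal}(\bar R/R),C_2)=R^*/(R^*)^2$ by Kummer theory, and use the known description of the class of $Q_8$ in $H^2(V_4,C_2)=\langle x^2,y^2,xy\rangle$ as $x^2+xy+y^2$. Inflating and using the field-cohomology relation $\chi_a\cup\chi_a=\chi_a\cup\chi_{-1}$ to rewrite $\chi_{a_i}^2$ as the quaternion class $(a_i,a_i)$, the obstruction comes out as the product $(a_1,a_1)\,(a_2,a_2)\,(a_1,a_2)\in{}_2\mathrm{Br}(R)$.

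Next I would match this with Witt's quadratic-form condition. Both $\langle a_1,a_2,a_3\rangle$ and $\langle1,1,1\rangle$ have dimension $3$, and — since $a_3\equiv a_1a_2$ modulo squares — both have trivial discriminant. By the classification of quadratic forms of dimension $\le 3$ over an arbitrary field of characteristic $\neq 2$ (dimension, discriminant and Hasse--Witt invariant form a complete set of invariants), the two forms are isometric over $R$ iff their Hasse--Witt invariants coincide. Expanding the Hasse--Witt invariant of $\langle a_1,a_2,a_3\rangle$, again using $a_3\equiv a_1a_2$ and the symmetry $(x,y)=(y,x)$, one gets exactly $(a_1,a_1)(a_2,a_2)(a_1,a_2)$, i.e. the obstruction class above. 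Hence the embedding problem is solvable, equivalently $L/R$ lies in a $Q_8$-extension, precisely when $[a_1,a_2,a_3]\cong[1,1,1]$ over $R$. This settles the first assertion.

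For the parametrization I would invoke that, once solvable, the solutions of an embedding problem with kernel $C_2$ form a principal homogeneous space under $H^1(\mathrm{Gal}(\bar R/R),C_2)=R^*/(R^*)^2$; this is what produces the parameter $\rho\in R^*$ and the claim that every $Q_8$-extension containing $L$ is of the stated form once one such extension is exhibited. To exhibit one I follow Witt's crossed-product construction: choose $T=(t_{ij})$ with $\det T=1$ and $T^{\mathsf t}\,\mathrm{diag}(a_1,a_2,a_3)\,T=I$ (so the columns of $T$ are an orthonormal basis of $\langle a_1,a_2,a_3\rangle$, giving the relations $\sum_k a_k t_{ki}t_{kj}=\delta_{ij}$), set $\beta:=1+t_{11}\sqrt{a_1}+t_{22}\sqrt{a_2}+t_{33}\sqrt{a_3}\in L$, and verify that $F:=L(\sqrt\beta)=R(\sqrt\beta)$ is Galois over $R$ with group $Q_8$. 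The verification splits into: (i) $\beta$ generates $L/R$; (ii) $L(\sqrt\beta)/R$ is normal, checked by showing $\sigma(\beta)\,\beta$ is a square in $L$ for every $\sigma\in\mathrm{Gal}(L/R)$ — and the orthonormality relations are precisely what turn the relevant norms into squares; (iii) the Galois group is $Q_8$ rather than $C_4\times C_2$ or $C_2^3$, which reduces to showing that a lift of a generator of $\mathrm{Gal}(L/R)$ squares to the nontrivial central element (equivalently, the relevant norm is $-1$, not $+1$), together with the fact that $Q_8$ is the only group of order $8$ with three distinct cyclic subgroups of order $4$ — one over each quadratic subfield $R(\sqrt{a_i})$ of $L$, over which $F$ then becomes cyclic quartic. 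The normalization $\det T=1$ is exactly what forces (iii) and not a split extension. All remaining $Q_8$-extensions containing $L$ are then $R(\sqrt{\rho\beta})$, $\rho\in R^*$, by the $H^1$-torsor structure.

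I expect the main obstacle to be step (iii) together with the exhaustiveness claim: controlling the \emph{isomorphism type} of the Galois group — tracking how $\det T=1$ propagates through the norm identities to land on the central involution — is the delicate heart of Witt's original crossed-product argument, and confirming that varying $\rho$ (and not varying the admissible $T$) already realizes the entire $R^*/(R^*)^2$-torsor of solutions requires a careful bookkeeping of which data are genuinely parameters and which are redundant. By contrast the obstruction computation and its translation into the equivalence $[a_1,a_2,a_3]\cong[1,1,1]$ are routine given the cited cohomological and quadratic-form machinery.
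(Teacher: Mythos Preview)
The paper does not prove this proposition. It is stated as Witt's classical 1936 result, with a citation to his original paper, and no proof is given or attempted. What the paper does in the surrounding text is \emph{apply} the proposition: specializing to $R=\mathbb{Q}(\sqrt{-2})$, $a_1=2$, $a_2=k$, $a_3=(2k)^{-1}$, the authors (i) check by hand that $[2,k,(2k)^{-1}]\cong[1,1,1]$ over $R$, (ii) exhibit an explicit matrix $T$ with $\det T=1$ realizing the equivalence, (iii) compute the resulting $\beta$, and (iv) verify that for a suitable $\rho\in R^*$ one has $L(\sqrt{\rho\beta})=L(viR_-)=\mathbb{Q}(w,a)=E$, their previously constructed Pauli splitting field. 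So there is nothing to compare your argument against in the paper itself.

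That said, your outline is a correct and standard modern proof of Witt's theorem. The identification of the obstruction as $(a_1,a_1)(a_1,a_2)(a_2,a_2)\in{}_2\mathrm{Br}(R)$ via the class $x^2+xy+y^2\in H^2(V_4,C_2)$, and its match with the Hasse--Witt invariant of $\langle a_1,a_2,a_1a_2\rangle$, is exactly how this is done today; the $H^1$-torsor description of the solution set is likewise the right framework for the parametrization. Witt's original 1936 argument proceeds more concretely via factor systems (crossed products) rather than through the cohomological language, but your sketch of the construction of $\beta$ from the diagonal of $T$ and the verification via norm identities is faithful to his method. Your own caveat is well placed: the point where $\det T=1$ is used to force the central involution (rather than a split extension) is the one genuinely delicate computation, and you have not carried it out---but you have correctly located it.
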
 
The Galois $P$-extensions of $\mathbb{Q}$ that we constructed in section 2, for any $k$ respecting (1), 
lead to a family of $Q_8$-extensions of 
$\mathbb{Q}(ir) = \mathbb{Q}(\sqrt{-2})$.
Here we explicate in detail that our construction is fully congruent with the result of Witt in the case of the ground field being $R=\mathbb{Q}(ir)$ and $a_1=2, a_2=k, a_3=1/2k$.\newline
To begin with, indeed $[2,k,(2k)^{-1}]\cong[2,k,2k]\cong [1,1,1]$ are $\mathbb{Q}(ir)$-equivalent forms, since by the Witt relations in $\mathbb{Q}$: $[1,1,1] \cong [2,2,1]$, and because, $-2$ being a  square in $\mathbb{Q}(ir)$, we obtain 
$ [2,2,1] \cong [-1,-1,1]\cong [-1,1,-1] \cong [-1,k,-k] \cong [2,k,2k]$.


A matrix $T$ over $\mathbb{Q}(ir)$ that establishes the equivalence, $T^t Diag(2,k,1/2k)T = E$, 
of the two quadratic forms $[2,k,2k] \cong [1,1,1]$, normed to $det(T) = 1$, abbreviating 
$K=k+\frac{1}{2}, 
\kappa=k-\frac{1}{2}$, is given by :
\begin{equation}
T = -(1/2)\begin{bmatrix}1 & 1 & 0\\ -K/k & K/k & -\kappa ir/k \\ \kappa ir & -\kappa ir& -2K \end{bmatrix}, 
 \nonumber
\end{equation}  
Its diagonal elements determine the $\beta$ of Witt's theorem. We get
\begin{equation}
\beta = 1 - \frac{1}{2}\sqrt{2}-\frac{K}{2k}\sqrt{k} + \frac{K}{2k}\sqrt{2k} \nonumber
\end{equation} 
\begin{equation}
= (2-\sqrt{2})(k + K\sqrt{2k}/2)/2k 
\nonumber
\end{equation}
\begin{equation}
= (R_-^2 \sqrt{k}) (4\sqrt{k} + (2k+1)\sqrt{2})/8k 
\nonumber
\end{equation}
\begin{equation}
= - (viR_-)^2 (w(1+\sqrt{2k}))^2/4kir \newline
\end{equation}   
with $w=(1+i)r/2$, c.f. (3). Choosing the non-zero element $\rho = -4kir$ of $\mathbb{Q}(ir)$ we obtain
\begin{equation}
\rho \beta = (viR_-)^2 (w(1+\sqrt{2k}))^2.\nonumber
\end{equation}
Since the second factor on the right side is a square of an element of $L$, it follows:
\begin{equation}
L(\sqrt{\rho \beta}) = L(viR_-).  
\end{equation}
Referring to (3) once again we observe $a - \Bar{a} = viR_-$. 
This $viR_-$ being non-zero, and $w$ contained in $L$, it is easily seen that 
\begin{equation}
L(viR_-) = L(a-\Bar{a})= \mathbb{Q}(w,a) = E. \nonumber 
\end{equation}
Thus by (8) our splitting field $E$ has indeed the desired form, with $\rho$ and $\beta$ explicitly given as above:
\begin{equation}
E = L(\sqrt{\rho \beta}) =  \mathbb{Q}(ir)(\sqrt{\rho \beta}),  
\end{equation}
We have thus automatically found pure polynomials $f_k$ that represent $Q_8$-extensions 
over $\mathbb{Q}(ir)$, while no pure polynomial of degree 8 can be found to give rise to a Galois $Q_8$ extension over $\mathbb{Q}$, as is proven in the next section.


\section{Galois groups of pure polynomials}\label{sec4}

Which finite groups can be realized as Galois groups by \textit{pure} polynomials over $\mathbb{Q}$? 
In the context of this rather general question, this section serves a double purpose: on one hand, 
we will show that the construction in section 2 is not only sufficient, but indeed necessary for $P$ occurring as a Galois group of a pure polynomial of degree 8 over $\mathbb{Q}$.
On the other hand, we shall prove that the subgroup $Q_8 \triangleleft P$ cannot be realized over $\mathbb{Q}$ by a pure polynomial of degree 8. 
This situation means that our Pauli-construction via $f_k = X^8 + k^2$ proves that $Q_8$ is exactly one quadratic $\mathbb{Q}$-extension away from being realizable itself by a pure polynomial over $\mathbb{Q}$. Thus we proceed with \newline

\begin{proposition}
Let $f = X^8 + c$ be a polynomial over $\mathbb{Q}$ with $c\ne 0$. Then we have:\newline

a. If $f$ is reducible then its Galois group can be neither $P$ nor $Q_8$.




b. For $c$ not a square in $\mathbb{Q}$, the Galois group of $f$ over $\mathbb{Q}$ is not $P$.\newline

\end{proposition}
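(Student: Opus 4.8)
The plan rests on two group-theoretic facts. First, $Q_8$ has a \emph{unique} subgroup of order $2$, its centre $[Q_8,Q_8]$. Second, $P$ has a unique \emph{normal} subgroup of order $2$, namely $P'=[P,P]$, the unique order-$2$ subgroup of the cyclic centre $Z(P)\cong C_4$; and $P/P'\cong E_8=C_2^3$. I also record that, since the $8$ roots of $f$ are $a_0\zeta_8^m$ ($m=0,\dots,7$) for any fixed root $a_0$ and are pairwise distinct when $c\ne 0$, the splitting field $E$ of $f$ always contains $\mathbb{Q}(\zeta_8)=\mathbb{Q}(i,\sqrt 2)$; in particular $i\in E$ and $E\not\subseteq\mathbb{R}$. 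The $Q_8$ part of (a) then follows for \emph{every} $c\ne 0$: if $Gal(E/\mathbb{Q})\cong Q_8$ then $[E:\mathbb{Q}]=8$, so $Gal(E/\mathbb{Q}(\zeta_8))$ has order $2$, hence equals the unique order-$2$ subgroup $Z(Q_8)$, giving $\mathbb{Q}(\zeta_8)=E^{Z(Q_8)}$; but complex conjugation restricts to an automorphism $\kappa$ of $E$ of order exactly $2$ (nontrivial since $i\in E$), so $\langle\kappa\rangle=Z(Q_8)$ as well and $\mathbb{Q}(\zeta_8)=E^{\langle\kappa\rangle}=E\cap\mathbb{R}$, contradicting $i\in\mathbb{Q}(\zeta_8)$.

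For the $P$ assertions I will use the following consequence of the second fact: if $Gal(E/\mathbb{Q})\cong P$ and $M\subseteq E$ is such that $M/\mathbb{Q}$ is Galois and $[E:M]=2$, then $Gal(E/M)$ is a normal subgroup of $P$ of order $2$, hence $=P'$, so $M=E^{P'}$ and $Gal(M/\mathbb{Q})\cong C_2^3$; equivalently, producing inside $E$ a Galois-over-$\mathbb{Q}$ subfield of index $2$ whose group is not $C_2^3$ rules out $P$. For part (a), the binomial irreducibility criterion shows that reducibility of $X^8+c$ forces either $c=4\mu^4$ with $\mu\in\mathbb{Q}^*$, or $-c=e^2$ with $e\in\mathbb{Q}_{>0}$. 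In the first case $\beta:=\sqrt{\mu(1+i)}$ is a root of $f$ (since $\beta^{8}=\mu^{4}(1+i)^{4}=-4\mu^{4}=-c$) with $i\in\mathbb{Q}(\beta)$, so $E=\mathbb{Q}(\beta,\zeta_8)=\mathbb{Q}(\beta)(\sqrt2)$ has degree $\le 8<16=|P|$ and $Gal(f)\not\cong P$. In the second case $X^8+c=(X^4-e)(X^4+e)$ and, since $(-e)^{1/4}=\zeta_8 e^{1/4}$, $E=\mathbb{Q}(e^{1/4},\zeta_8)$; if $X^4-e$ is reducible then $e$ is a square (as $e>0$) and $[E:\mathbb{Q}]\le 8$; if $X^4-e$ is irreducible, assume $Gal(E/\mathbb{Q})\cong P$, so $[E:\mathbb{Q}]=16$: since $\mathbb{Q}(e^{1/4})\subseteq\mathbb{R}$ we have $i\notin\mathbb{Q}(e^{1/4})$, so $M:=\mathbb{Q}(e^{1/4},i)$ (the splitting field of $X^4-e$) is Galois over $\mathbb{Q}$ with $[E:M]=2$, whence $Gal(M/\mathbb{Q})\cong C_2^3$; but then every subfield of $M$ is normal over $\mathbb{Q}$, contradicting that the degree-$4$ subfield $\mathbb{Q}(e^{1/4})$ is not normal. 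So $Gal(f)\not\cong P$ in all cases.

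For part (b), suppose $c$ is not a square and $Gal(E/\mathbb{Q})\cong P$; by part (a) we may assume $f$ irreducible, so $-c\notin\mathbb{Q}^{*2}$ and $c\notin 4\mathbb{Q}^{*4}$ — which are exactly the conditions making $X^4+c$ irreducible too. For a root $a$ of $f$, the element $\beta:=a^2$ is then a root of $X^4+c$ with $[\mathbb{Q}(\beta):\mathbb{Q}]=4$ and $\mathbb{Q}(\beta^2)=\mathbb{Q}(\sqrt{-c})$ quadratic, and $M:=\mathbb{Q}(\beta,i)\subseteq E$ is the splitting field of $X^4+c$, hence Galois over $\mathbb{Q}$. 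The crux is to prove $i\notin\mathbb{Q}(\beta)$: if $i\in\mathbb{Q}(\beta)$, compare the two quadratic subfields $\mathbb{Q}(i)$ and $\mathbb{Q}(\sqrt{-c})$ of $\mathbb{Q}(\beta)$ — if they coincide then $-c\in-\mathbb{Q}^{*2}$, i.e.\ $c\in\mathbb{Q}^{*2}$; if not, then $\mathbb{Q}(\beta)=\mathbb{Q}(i,\beta^2)$, so $\beta^2$ is a square in $\mathbb{Q}(\sqrt{-c})(i)$, which (as $\beta\notin\mathbb{Q}(\beta^2)$, so $\beta^2$ is a non-square in $\mathbb{Q}(\sqrt{-c})$) forces $-\beta^2\in\mathbb{Q}(\sqrt{-c})^{*2}$, and writing $-\sqrt{-c}=(x+y\sqrt{-c})^2$ with $x,y\in\mathbb{Q}$ yields $c=(x/y)^2\in\mathbb{Q}^{*2}$; both alternatives contradict the hypothesis. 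Hence $i\notin\mathbb{Q}(\beta)$, so $[M:\mathbb{Q}]=8$ and $[E:M]=2$; by the consequence above $Gal(M/\mathbb{Q})\cong C_2^3$, so every subfield of $M$ is normal over $\mathbb{Q}$, contradicting that $\mathbb{Q}(\beta)$ is not normal (it does not contain the conjugate $i\beta$). Therefore $Gal(f)\not\cong P$.

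The step I expect to be the main obstacle is the crux in part (b), $i\notin\mathbb{Q}(\beta)$ (and, dually, the identification in part (a) of when a pure quartic has a biquadratic rather than a $D_4$ splitting field): this is precisely where the hypothesis ``$c$ is not a square'' is consumed, via the small Kummer-type dichotomy over $\mathbb{Q}(\sqrt{-c})$. Everything else is degree bookkeeping together with the two uniqueness statements about subgroups of $Q_8$ and of $P$.
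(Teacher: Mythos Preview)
Your proof is correct, and it takes a genuinely different route from the paper's.

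For part~(a), the paper argues uniformly for both $P$ and $Q_8$: if $f$ is reducible then each irreducible factor has degree $\le 4$; since neither $Q_8$ nor $P$ embeds in $S_4$, no single factor can have splitting field equal to $E$, and (using that the only quotient of $P$ of order~$8$ is abelian) each factor's splitting field is abelian over~$\mathbb{Q}$, whence their compositum $E$ is abelian --- contradiction. Your argument instead splits off the $Q_8$ case with the complex-conjugation trick (which actually proves the stronger statement that $Q_8$ is never the Galois group of $X^8+c$ for any $c\ne 0$, essentially the paper's later Proposition~5, but by a much shorter argument), and handles the $P$ case by explicit case analysis via the binomial irreducibility criterion together with your ``index-$2$ Galois subfield has group $E_8$'' lemma. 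The paper's compositum argument is slicker in that it never invokes the reducibility criterion; your argument yields more as a by-product.

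For part~(b), the paper writes $c=tk^2$ with $t$ square-free, observes that $|G|=16$ forces $\sqrt{t}\in\mathbb{Q}(\zeta_8)$, reduces to $t=\pm 2$, and then exhibits the \emph{non-normal} degree-$4$ subfield $\mathbb{Q}(\sqrt[4]{2k^2})\subseteq E$, contradicting the fact that every order-$4$ subgroup of $P$ is normal. You instead pass to $\beta=a^2$, show by a short Kummer argument over $\mathbb{Q}(\sqrt{-c})$ that $i\notin\mathbb{Q}(\beta)$ (this is exactly where non-squareness of $c$ is used), conclude that $M=\mathbb{Q}(\beta,i)$ is Galois of index~$2$ in $E$, hence $Gal(M/\mathbb{Q})\cong E_8$, and then derive a contradiction from the non-normal subfield $\mathbb{Q}(\beta)\subseteq M$. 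Both arguments ultimately exploit the same structural features of $P$ from complementary sides (``all order-$4$ subgroups normal'' versus ``the unique normal $C_2$ has quotient $E_8$''). The paper's route has the incidental benefit of isolating the residual cases $c=\pm 2k^2$, which it then identifies with $D_{16}$ and $QD_{16}$; your route is more uniform, reusing the same $E_8$-quotient lemma as in part~(a).
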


Propositions 1 and 3 taken together now give us our primary result (recalling our statement before prop.1 that for any $k=m^2$, the Galois group of $f_k$ has order 8 and thus cannot be $P$): \newline
\begin{proposition}
The order 16 Pauli group $P$ occurs as the Galois group of a pure polynomial $f\in \mathbb{Q}[X]$ of degree 8  if and only if $f=f_k=X^8+k^2$, with $k$ satisfying (1), ie. $k\neq \lambda^2, 2\lambda ^2;  0 < k,\lambda \in \mathbb{Q}$.
\end{proposition}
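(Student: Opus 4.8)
The plan is to derive Proposition~4 as a formal consequence of Propositions~1 and~3, together with the elementary observation recorded just before Proposition~1 that for $k$ a square — equivalently, for $f=X^8+\lambda^4$ — the Galois group of $f_k$ has order~$8$. (A pure polynomial $aX^8+b$ has the same set of roots, hence the same splitting field and the same Galois group, as $X^8+b/a$; so we lose nothing by taking $f=X^8+c$ monic, exactly as in Proposition~3.) With these reductions, the ``if'' direction is immediate: if $f=f_k=X^8+k^2$ with $k$ satisfying~(1), then Proposition~1 gives $Gal(f/\mathbb{Q})\cong P$ at once. Thus the entire task reduces to the ``only if'' direction, and the work there is purely organisational.

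For the converse, I would start from a pure octic $f=X^8+c$ with $c\neq 0$ and $Gal(f/\mathbb{Q})\cong P$, and pin down the shape of $c$ by two applications of Proposition~3. First, Proposition~3a forces $f$ to be irreducible, since a reducible pure octic cannot have Galois group $P$. Second, Proposition~3b forces $c$ to be a square in $\mathbb{Q}$; being a nonzero square it is positive, so I may write $c=k^2$ with $0<k\in\mathbb{Q}^*$, whence $f=f_k$.

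It then remains to check that this $k$ lies in the set described by~(1). Since $X^8+k^2$ is irreducible, the irreducibility criterion (b) for $n=8$ recalled in Section~\ref{sec2} applies and gives $k\neq\pm 2\lambda^2$; as $k>0$ this reads $k\neq 2\lambda^2$ for every $\lambda\in\mathbb{Q}^*$. (Criterion (a) holds automatically for $n=8$, so irreducibility yields nothing further.) The remaining exclusion $k\neq\lambda^2$ does \emph{not} come from irreducibility — indeed $X^8+\lambda^4$ is always irreducible — but from the order count: were $k=\lambda^2$ a square, then, as noted before Proposition~1, $Gal(f_k/\mathbb{Q})$ would have order~$8$, contradicting $|P|=16$. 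Hence $k$ satisfies both conditions in~(1), which completes the ``only if'' direction and thereby Proposition~4.

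Since no new mathematical input is involved, I anticipate no genuine obstacle; the points that need care are purely bookkeeping ones. One must attribute the two exclusions in~(1) to their distinct sources — $k\neq 2\lambda^2$ to irreducibility, $k\neq\lambda^2$ to $|P|=16$ — so as not to imagine that the first alone suffices, and one should note that the hypotheses are mutually consistent: if a square $c=k^2$ happened to have $k=2\lambda^2$, then $f_k$ would be reducible and Proposition~3a would already exclude $P$, so no admissible polynomial is overlooked.
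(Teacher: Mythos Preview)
Your proposal is correct and matches the paper's approach exactly: the paper presents Proposition~4 as an immediate corollary of Propositions~1 and~3 together with the remark (stated before Proposition~1) that $Gal(f_k/\mathbb{Q})$ has order~$8$ when $k$ is a square. Your write-up simply spells out the bookkeeping that the paper leaves implicit, including the correct attribution of $k\neq 2\lambda^2$ to irreducibility and $k\neq\lambda^2$ to the order count.
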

\begin{proof} 
(of prop.3): 

ad a.: We show the equivalent statement: If the Galois group of the polynomial $f$ is $P$ or $Q_8$, $f$ must be irreducible. We assume to the contrary that $f$ be reducible, and that its Galois group $G$ is of type $Q_8$ or $P$: 
\newline
Then the irreducible factors of $f$ can only have degree 1,2, or 4. 
Not all factors can be of degree $\le 2$, else $G$ would be abelian.
Thus we have $f=gh$ with an irreducible $g$ of degree 4 (and $h$ of degree 4, possibly reducible). The splitting field $E$ of $f$ is the composite of the splitting fields $E_g$ and $E_h$ of $g$ and $h$, respectively. Here $E_g=E$ is impossible, else $G$ would be the Galois group of an irreducible polynomial of degree 4, and thus isomorphic to a subgroup of $S_4$, 
which excludes $G = Q_8$, but also $G = P$, since $P$ contains $Q_8$ as a subgroup. 
So in case $G = Q_8$ it follows $E_g:\mathbb{Q}=4$, and in case $G = P$ it follows $E_g:\mathbb{Q}=4$ or $E_g:\mathbb{Q}=8$. If  $E_g:\mathbb{Q}=4$, it is abelian. But the same holds also if $E_g:\mathbb{Q}=8$, since the only factor group of $P$ having order 8 is (elementary) abelian. If the factor $h$ in $f=gh$ is irreducible, then applying the above reasoning to $h$ instead of to $g$ shows that $E_h/\mathbb{Q}$ is abelian. But this evidently is also the case if $h$ is reducible.
With both $E_g/\mathbb{Q}$ and $E_h/\mathbb{Q}$ abelian, the compositum $E/\mathbb{Q}$ is abelian, too, which contradicts the assumption $G=G(E/\mathbb{Q}) = P$ or $=Q_8$.
\newline



ad b.: We write $c=tk^2$, with square free $t\ne 1; t\in \mathbb{Q}^*; 0<k\in \mathbb{Q}$, and let $a$ be a root of $f=X^8+c$.
Then 
$(a^4)^2 = -c =  -tk^2 = (i\sqrt{t}k)^2$, or
\begin{equation}
a^4 = \pm i\sqrt{t}k
\end{equation}
We now assume that the Galois group of $f$ was of order 16.
Then for degree reasons, the splitting field $E=\mathbb{Q}(w,a)$ has degree 4 over $\mathbb{Q}(w)$, hence $a^4\in \mathbb{Q}(w)$, and from (10) follows
\begin{equation}
\sqrt{t} \in \mathbb{Q}(w).
\end{equation}
So  $\mathbb{Q}(\sqrt{t})$ must be one of the three quadratic subfields of $\mathbb{Q}(w)$, and this implies: 
$t=2x^2$ or $t= -x^2$ or  $t=-2x^2$ for some $x\in \mathbb{Q}$. For square free $t$ it follows  $t= 2, -1$, or $-2$. But $t=-1$ ie. $c=-k^2$ is taken care of by the reducible case 3a, and thus
\begin{equation}
c = -2k^2, \text{resp.   }  c = +2k^2 .
\end{equation}

Furthermore, due to $(a^2)^4 = a^8 = -c = 2k^2$, resp. $-2k^2$ we have\newline
$(a^2)^4 = (\sqrt{k}\sqrt[4]{2})^4, \text{  resp.  } 
(a^2)^4 = (w\sqrt{k}\sqrt[4]{2})^4$, therefore in both cases:
\begin{equation}
\sqrt{k}\sqrt[4]{2} = \sqrt[4]{2k^2} \in E = \mathbb{Q}(w,a).
\end{equation}
As can be readily seen, the field $F:=\mathbb{Q}(\sqrt[4]{2k^2})$ has degree 4 over $\mathbb{Q}$.
The subgroup $G(E/F)$ of $G=G(E/\mathbb{Q})$ thus has order 4 and is not normal in $G$ (as $F/\mathbb{Q}$ is not normal). However, in the Pauli group $P$ every subgroup of order 4 is normal, thus $G\ne P$, q.e.d. 
\end{proof}
We note from (12) in the proof that Galois groups of order 16 for irreducible $f$ occur only for the cases $c= 2k^2, -2k^2$, if $c$ is not a square in $\mathbb{Q}$. It can be shown that they turn out to be the groups $Dih_8 = D_{16}$ and $QD_{16}$, respectively.\newline
Now we briefly turn to the question of realizability of $Q_8$ and $K_8= C_4 \times C_2$ as Galois groups for pure polynomials of degree 8: \newline

\begin{proposition}
If a group $G$ of order 8 is realizable as Galois group of an irreducible, pure polynomial $f=X^8+c$ over $\mathbb{Q}$, then $G$ is abelian and of isomorphism type $K_8 = C_4 \times C_2$.
In particular (observe also prop. 3a), the quaternion group $Q_8$ is not realizable as a Galois group of any pure polynomial of degree 8 over $\mathbb{Q}$. 
\end{proposition}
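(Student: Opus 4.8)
The plan is to put the Galois group inside the holomorph $Hol(C_8)=C_8\rtimes(\mathbb{Z}/8)^{*}$, as in the Jacobson--Velez picture already invoked in this section, and then finish with a short computation of element orders in $Hol(C_8)$. First I would fix the setup. Since $f=X^8+c$ is irreducible, $[\mathbb{Q}(a):\mathbb{Q}]=8$; the splitting field $E=\mathbb{Q}(a,w)$ (with $w=\zeta_8$, which lies in $E$ as a ratio of two roots) contains $\mathbb{Q}(a)$, so the hypothesis $|G|=[E:\mathbb{Q}]=8$ forces $E=\mathbb{Q}(a)$ and $w\in\mathbb{Q}(a)$. Because $[\mathbb{Q}(w):\mathbb{Q}]=\varphi(8)=4$, we get $[E:\mathbb{Q}(w)]=2$. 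Every $\sigma\in G$ satisfies $\sigma a=aw^{m(\sigma)}$ for some $m(\sigma)\in\mathbb{Z}/8$ (the roots of $f$ being the $aw^{j}$) and $\sigma w=w^{u(\sigma)}$ for some $u(\sigma)\in(\mathbb{Z}/8)^{*}$; since $E=\mathbb{Q}(a,w)$ this pair determines $\sigma$, so $\sigma\mapsto(m(\sigma),u(\sigma))$ is an injective homomorphism $G\hookrightarrow Hol(C_8)$, the induced action on the root indexed by $j$ being the affine map $j\mapsto m(\sigma)+u(\sigma)j$.

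Next I would read off the two structural constraints on the image. First, the homomorphism $u\colon G\to(\mathbb{Z}/8)^{*}$ has kernel $Gal(E/\mathbb{Q}(w))$ of order $2$, hence is surjective: $G$ maps onto $(\mathbb{Z}/8)^{*}\cong C_2\times C_2$, and its kernel $N=G\cap C_8$ is forced to be $\{(0,1),(4,1)\}$. Second, irreducibility means $G$ acts transitively on the eight roots, i.e.\ the translation parts of the eight elements of $G$ exhaust $\mathbb{Z}/8$; since those translation parts occur in pairs $\{m,m+4\}$ (one pair lying over each of $3,5,7\in(\mathbb{Z}/8)^{*}$, plus $\{0,4\}$ over the identity), this is precisely the statement that the translation parts of the lifts of $3$, of $5$ and of $7$ represent the three nonzero residues modulo $4$.

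Then comes the computation that closes the argument. A direct check of squares in $Hol(C_8)$ shows: every $(m,7)$ has order at most $2$; $(m,3)$ has order $4$ when $m$ is odd and order $2$ when $m$ is even; and $(m,5)$ has order $8$ when $m$ is odd, order $4$ when $m\equiv 2\pmod 4$, and order $2$ when $m\equiv 0\pmod 4$. An element of order $8$ in $G$ is impossible, since it would generate $G$ and then $u(G)$ would be cyclic, contradicting the surjectivity above; hence the two lifts of $5$ in $G$ have even translation part, and by the transitivity constraint that part is $\equiv 2\pmod 4$, so those two elements have order $4$. Transitivity then forces the two lifts of $3$ to have odd translation part, hence order $4$ as well, while the two lifts of $7$ pick up the remaining residue and have order $2$. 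Thus $G$ has exactly one element of order $1$, three of order $2$ (namely $(4,1)$ and the two lifts of $7$), and four of order $4$. Among the five groups of order $8$ this profile picks out $C_4\times C_2$ uniquely: $C_8$ has an element of order $8$, $C_2^{3}$ has seven involutions, $D_8$ has five, and $Q_8$ has only one. Hence $G\cong K_8$, so in particular $G\not\cong Q_8$; combining this with Proposition~3a (which handles reducible $f$) shows that $Q_8$ is not the Galois group of any pure polynomial of degree $8$ over $\mathbb{Q}$.

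I expect the only delicate point to be the bookkeeping modulo $4$ in the transitivity step together with making the case split on the lifts of $5$ genuinely exhaustive; the rest is an elementary finite check in $Hol(C_8)$ once the embedding is in place, and no computer algebra is needed.
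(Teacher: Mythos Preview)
Your proof is correct and takes a genuinely different route from the paper's. The paper argues field-theoretically: from $[E:\mathbb{Q}(w)]=2$ it deduces $a^{2}\in\mathbb{Q}(w)$, hence the real fourth root $v=c^{1/4}$ lies in $\mathbb{Q}(w)$ and therefore in its maximal real subfield $\mathbb{Q}(\sqrt{2})$; a short irreducibility argument then forces $v\in\mathbb{Q}$, so $c^{2}$ is an $8$th power in $\mathbb{Q}$, and Schinzel's theorem (Proposition~6) is invoked to conclude $G\cong K_{8}$. Your argument stays entirely inside $Hol(C_{8})$: surjectivity of $u$ onto $(\mathbb{Z}/8)^{*}\cong V_{4}$ rules out order~$8$, and transitivity of the affine action then fixes the residues of $m_{3},m_{5},m_{7}$ modulo $4$, yielding the element-order profile $(1,3,4)$ that singles out $C_{4}\times C_{2}$.

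What each approach buys: the paper's route extracts the additional arithmetic information that $c$ must be a fourth power in $\mathbb{Q}$, at the price of appealing to Schinzel. Your route is self-contained---no external theorem is needed---and fits squarely into the Jacobson--Velez ``full subgroup'' picture the paper already invokes in this section, but it does not by itself recover the shape of $c$. The step you flag as delicate is in fact clean: once $m_{5}\equiv 2\pmod 4$ is forced by the absence of order-$8$ elements, the set $\{m_{3}\bmod 4,\,m_{7}\bmod 4\}$ is automatically $\{1,3\}$, so both are odd, and the order count follows.
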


\begin{proof}
From the assumptions it immediately follows that for every zero $a$ of $f$, the extension 
$\mathbb{Q}(a)/\mathbb{Q}$ is normal and contains the field $\mathbb{Q}(w)$ of the 8th roots of unity. Thus $a$ cannot be real, and we have $c>0$.

Since $a^8 \in \mathbb{Q}(w)$ and $\mathbb{Q}(a)/\mathbb{Q}(w)$ has degree 2, $a^2$ must lie in $\mathbb{Q}(w)$, and in fact 
$\mathbb{Q}(a^2)=\mathbb{Q}(w)$. Now if $v$ denotes the real 4th root of $c$, we have $(a^2)^4=a^8= -c=(wv)^4$, and the element 
$a^2 \in \mathbb{Q}(w)$ must be $v$, up to an 8th root of unity, from which it follows that $v \in \mathbb{Q}(w)$.
Since $\mathbb{Q}(v)$ is real, 
it is contained in the maximal real subfield of 
$\mathbb{Q}(w)$, which is $\mathbb{Q}(\sqrt{2})$.
 In particular, $X^4-c$ cannot be irreducible, therefore $c$ must be a square in $\mathbb{Q}$, $c=d^2$ with some $d>0$ in $\mathbb{Q}$.
It follows $d=v^2$. 
If  $d$ is not a square in $\mathbb{Q}$, we get
$\mathbb{Q}(\sqrt{d})=\mathbb{Q}(v)=\mathbb{Q}(\sqrt{2})$, hence $d=2x^2$ with an $x \in \mathbb{Q}$. But then $c=d^2=4x^4$, a contradiction to the irreducibility of $X^8+c$.
Thus we are left with the conclusion that 
$v$ lies in  $\mathbb{Q}$. But then $c^2=(v^4)^2$ is an 8th power in $\mathbb{Q}$, and the Galois group of $f$ must be abelian, according to Schinzel's theorem (proposition 6) below, which also states that this group must be of isomorphism type  $K_8=C_4 \times C_2$.
\end{proof}

It turns out that $Q_8$ is the Galois group of an even octic polynomial over $\mathbb{Q}$, e.g. \newline
$x^8-12x^6+36x^4-36x^2+9$ \cite{bib5}, however, no simpler polynomial seems to be able to produce $Q_8$.
For the special case of a square constant term it has been shown that no polynomial of the form $X^8+bX^4+d^2$ has $Q_8$ as Galois group over $\mathbb{Q}$ \cite{bib21}.
\newline

By these considerations, one is naturally led to consider the more general question: 
What characterises Galois groups of irreducible pure polynomials $X^n-\gamma, \gamma \in \mathbb{Z}$  of a given order $n$ over $\mathbb{Q}$ in general (in particular for the cases $4|n$), apart from being solvable groups? Which of the finite groups do (or do not) occur for any $\gamma$? \newline

The \textit{abelian} case has been resolved in a beautiful theorem by Schinzel in 1977 \cite{bib8}, building on the work of Darbi in 1925 \cite{bib9}. For the convenience of the reader we include here only the special case of the ground field $\mathbb{Q}$, for which his theorem  on "abelian binomials" states:\newline

\begin{proposition}[Schinzel 1977]
Given a pure polynomial $f = X^n-\gamma, \gamma \in \mathbb{Q}$, let $G$ be its Galois group over $\mathbb{Q}$. Then $G$ is abelian if and only if $\gamma ^2$ is an $n$-th power in $\mathbb{Q}$ (or equivalently in $\mathbb{N}$).

In this case, and provided $f$ is irreducible, $G$ is cyclic, if 4 does not divide $n$, and otherwise it is isomophic to the direct product $K_n = C_2 \times C_{n/2}$. \newline
\end{proposition}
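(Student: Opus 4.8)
The plan is to prove both implications directly over $\mathbb{Q}$, using that the splitting field of $f=X^n-\gamma$ is $E=\mathbb{Q}(\zeta_n,\alpha)$ (with $\zeta_n$ a primitive $n$-th root of unity and $\alpha^n=\gamma$), and that $G=\mathrm{Gal}(E/\mathbb{Q})$ embeds as a \emph{full} subgroup of $Hol(C_n)=C_n\rtimes Aut(C_n)$, an element $\sigma$ being recorded by the pair $(c(\sigma),s(\sigma))$ with $\sigma(\alpha)=\zeta_n^{c(\sigma)}\alpha$ and $\sigma(\zeta_n)=\zeta_n^{s(\sigma)}$. The easy implication is short: if $\gamma^2=\delta^n$ with $\delta\in\mathbb{Q}^*$, then $(\alpha^2)^n=\gamma^2=\delta^n$, so $\alpha^2/\delta$ is an $n$-th root of unity and $\alpha=\eta\sqrt{\delta}$ for some $2n$-th root of unity $\eta$; hence $E\subseteq\mathbb{Q}(\zeta_{2n},\sqrt{\delta})$, and since every quadratic field lies in a cyclotomic field (quadratic Gauss sums) the latter is a subfield of a cyclotomic field, hence abelian over $\mathbb{Q}$, and so is its subfield $E$.

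For the converse I would first reduce to a statement about $\alpha^2$. Assuming $G$ abelian, $\mathbb{Q}(\alpha)/\mathbb{Q}$ is Galois (all subgroups of $G$ are normal), and since it contains the conjugates $\zeta_n^{m}\alpha$ it contains $\zeta_n$; thus $\mathbb{Q}(\alpha)=E\supseteq\mathbb{Q}(\zeta_n)$. Then $H=\mathrm{Gal}(E/\mathbb{Q}(\zeta_n))$ injects into $C_n$ via $c$, hence is cyclic of some order $d\mid n$; conjugating a generator of $H$ by an arbitrary $\sigma\in G$ and invoking commutativity forces $s(\sigma)\equiv 1\pmod d$ for every $\sigma$, so the (surjective) reduction $(\mathbb{Z}/n)^\times\to(\mathbb{Z}/d)^\times$ is trivial, whence $d\mid 2$ and $\alpha^2\in\mathbb{Q}(\zeta_n)$.

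Next I would complete the \emph{irreducible} case (the reducible one being settled afterwards by induction on $n$ via Capelli's criterion, which reduces it to a binomial of smaller degree over $\mathbb{Q}$ or over $\mathbb{Q}(i)$). Here $|G|=[\mathbb{Q}(\alpha):\mathbb{Q}]=n$ and, by fullness, every $c\in\mathbb{Z}/n$ occurs as some $c(\sigma)$. Choosing $\rho$ with $c(\rho)=1$ and setting $u:=1-s(\rho)$, the commutation relation in $Hol(C_n)$, namely $c(\sigma)(1-s(\sigma'))\equiv c(\sigma')(1-s(\sigma))\pmod n$, applied with $\sigma'=\rho$ gives $s(\sigma)=1-c(\sigma)u$; so $\sigma$ is determined by $c(\sigma)$, and $G=\{\sigma_c:c\in\mathbb{Z}/n\}$ with $\sigma_c(\zeta_n)=\zeta_n^{1-cu}$. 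Requiring $1-cu$ to be a unit for every $c$ forces $\mathrm{rad}(n)\mid u$ (the radical being the product of the primes dividing $n$), while surjectivity of $s$ forces $\gcd(u,n)=n/\phi(n)$; together these give $\mathrm{rad}(n)\mid n/\phi(n)$, which holds only for $n=1$ and $n=2^k$. For such $n$ one has $\gcd(u,n)=2$, so $ju\equiv -2\pmod n$ is solvable; for such a $j$ the elements $\alpha^2\in\mathbb{Q}(\zeta_n)$ and $\zeta_n^{j}$ have the same image under every $\sigma_c$, hence $\alpha^2=q\zeta_n^{j}$ with $q\in\mathbb{Q}^*$, and $\gamma^2=(\alpha^2)^n=q^n\in\mathbb{Q}^{*n}$, as desired. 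The isomorphism type then follows: for $f$ irreducible with $\gamma^2$ an $n$-th power we are forced into $n=1$, $2$, or $2^k$ with $k\ge 2$; for $n=2$, $G\cong C_2=C_n$, and for $n=2^k$ with $k\ge 2$ (so $4\mid n$), $G$ is abelian of order $2^k$ with $G/H\cong(\mathbb{Z}/2^k)^\times$ and $H\cong C_2$, and a short computation of the $2$-torsion of $G\cong(\mathbb{Z}/2^k,\ast)$, $c\ast c'=c+c'-ucc'$, together with the $2$-adic valuation of $5^{2^{k-2}}-1$ to exhibit an element of order $2^{k-1}$, identifies $G\cong C_2\times C_{2^{k-1}}=K_n$; so $G$ is cyclic exactly when $4\nmid n$.

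The hard part is the heart of the converse: passing from ``$\alpha^2\in\mathbb{Q}(\zeta_n)$ and some power of it is rational'' to ``$\gamma^2$ is genuinely an $n$-th power in $\mathbb{Q}$''. This is false on the nose — e.g.\ $1+i\in\mathbb{Q}(\zeta_4)$ yet $(1+i)^4=-4\notin\mathbb{Q}^{*4}$ — so the argument must really use the full abelian structure of $G$ inside $Hol(C_n)$ (the commutation relations combined with fullness/irreducibility), not merely the abelianness of the intermediate cyclotomic layers. Organizing precisely this, and then handling cleanly the reducible case (Capelli plus induction) and the final identification of the isomorphism type, are where the real care is needed; the rest is routine bookkeeping.
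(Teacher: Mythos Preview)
The paper does not actually prove this proposition. It is quoted as Schinzel's 1977 result with a citation, and only the easy implication (if $\gamma^{2}$ is an $n$-th power then $G$ is abelian) receives an argument, in the Remark immediately following the statement: if $\gamma^{2}=d^{n}$ and $a$ is a root, then $a^{2n}=d^{n}$ gives $a=z\sqrt{d}$ with $z^{2n}=1$, so $a\in\mathbb{Q}(z,\sqrt{d})$, which is abelian over $\mathbb{Q}$. Your easy direction is this same argument (the extra remark that $\mathbb{Q}(\sqrt{d})$ sits in a cyclotomic field is not needed, since a compositum of abelian extensions is abelian).

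Your treatment of the converse therefore goes well beyond what the paper offers, and the heart of it --- the irreducible case --- is correct and well organised: the identification $\mathbb{Q}(\alpha)=E$, the bound $d\mid 2$ from commutativity with a generator of $H$, the parametrisation $s(\sigma_c)=1-cu$ obtained from commuting with a $\rho$ of $c$-coordinate $1$, and the pair of constraints $\mathrm{rad}(n)\mid u$ and $\gcd(u,n)=n/\phi(n)$ forcing $n$ to be a power of $2$ all check out. One small remark: surjectivity of $c$ is most directly a consequence of irreducibility (transitivity of $G$ on the roots $\zeta_n^{j}\alpha$), so you need not invoke ``fullness'' as a black box. The conclusion $\alpha^{2}=q\,\zeta_n^{j}$ via the solvability of $ju\equiv -2\pmod n$ is clean and yields $\gamma^{2}=q^{n}$ at once.

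Where your proposal remains a sketch is in the reducible case and in the isomorphism type. For the former, ``Capelli plus induction'' is the right shape but not automatic: when $\gamma=\beta^{p}$ one must verify that the abelian hypothesis descends to $X^{n/p}-\beta$ and that $\beta^{2}\in\mathbb{Q}^{*\,n/p}$ pulls back to $\gamma^{2}\in\mathbb{Q}^{*\,n}$, and the exceptional Capelli case $4\mid n$, $-\gamma=4\delta^{4}$ needs its own bookkeeping. For the isomorphism type, your $2$-torsion count (four elements, for $k\ge 2$) is correct and excludes three or more cyclic factors, but it does not by itself separate $C_{2}\times C_{2^{k-1}}$ from $C_{2^{a}}\times C_{2^{k-a}}$ with $a\ge 2$; you still owe the promised element of order $2^{k-1}$, e.g.\ by lifting a generator of the $C_{2^{k-2}}$ summand of $(\mathbb{Z}/2^{k})^{\times}$ through the order-$2$ kernel $H$ and checking it does not square into $H$ too early.
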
 

Remark: Above we used only a small part of Schinzel's result, namely if $\gamma ^2$ is an $n$-th power in $\mathbb{Q}$, then the Galois group of $f$ is abelian.  
This is in fact easily proven:  If  $\gamma^2 = d^n$ with $d \in \mathbb{Q}$, and if $a$ denotes a zero of $f$, then  $a^{2n}  =  d^n  =  \sqrt{d}^{2n}$, and hence  $a = z \sqrt{d}$ with  $z^{2n} = 1$. Therefore $a$ is an element of the extension field $\mathbb{Q}(z, \sqrt{d})$, which is abelian over $\mathbb{Q}$.

In our context with irreducible $f=X^n-\gamma$, it can be readily seen that over $\mathbb{Q}$ these "rare" abelian cases can arise only for $n=2^s, s\geq 1$. 
The cyclic case occurs for $n=2$ only, and for $s\geq 2$ these groups are all of the form $K_n = C_2 \times C_{n/2}$, starting with $K_4=V_4$, the Klein group of order 4, for $n=4$.\newline

Considering the Galois groups of irreducible polynomials $f=X^n-\gamma = X^n +c$ for $n=2^s$, $m = n/2$, corresponding to $s, s-1$, \textit{for a fixed}  $c$, we thus realize that  those groups for  $c=d^{n/4}=d^{m/2} \neq \mu ^m$ that were abelian for $X^m+c$ have become non-abelian for $X^n+c$, whereas those with $c=d^{n/2}=d^m$ remain abelian,  after having moved up $n$ by one power of $2$, ie. from $2^{s-1}$ to $2^s$. 

For our case $n=8$, 
in this sense the Pauli group for irreducible $X^8+c$
"lies above" $V_4$ as Galois group for $X^4+c$ for $c=k^2 \neq \mu ^4$, whereas the abelian $K_8$ "lies above" $V_4$ for $c=\mu ^4$, ie. a forth power in $\mathbb{Q}$. 

For the next level $n=16$, analogously,  nonabelian groups (including the modular group of order 16, $M_4(2)$, and $C_8 \circ P$) lie above 
$K_8$ (as Galois group for $X^8+c$) for $c=d^4 \neq \mu ^8$, whereas the abelian $K_{16}$ lies above $K_8$ for $c=\mu ^8$, for some $\mu \in \mathbb{Q}$.

In line with the results of Jacobson and Velez cited in the introduction \cite{bib24}, on every level $n$ the resulting Galois groups $G = Gal(X^n+c)$ over $\mathbb{Q}$ for different $c$ are all (full) subgroups of one and the same bigger group $B_n$, of order $n \phi(n)$, with:
\begin{equation}
    G < B_n := Aut(D_{2n}) \cong C_n \rtimes Aut(C_n)
    = Hol(C_n) \cong C_n \rtimes (C_n)^\times
    \nonumber
\end{equation}
 
For $n=8$, we thus obtain for the Pauli group $P$: 
\begin{equation}
    P < B_8 = C_8 \rtimes V_4 = P \rtimes_2 C_2 = G_{32}^{(43)}
    \nonumber
\end{equation}
with this latter group, of order 32 and type 43 \cite{bib5}, as Galois group of $X^8+c$ for e.g. $c=3$. 
Altogether, for $n=8$, we obtain  the following complete list of Galois groups over $\mathbb{Q}^*$ for 
\textit{irreducible} $f_c=X^8+c$, $c\in \mathbb{Q}$:\begin{equation}
    c=d^4: Gal(f_c) = K_8 \nonumber
\end{equation}
\begin{equation}
    c=+2d^2: Gal(f_c) = D_{16} \nonumber
\end{equation}
\begin{equation}
    c=-2d^2: Gal(f_c) = QD_{16} \nonumber
\end{equation}
\begin{equation}
    c=k^2\ne d^4, 4d^4: Gal(f_k) = P \nonumber
\end{equation}
\begin{equation}
\text{else:      } Gal(f_c) = B_8 = C_8 \rtimes V_4 = P \rtimes_2 C_2 \nonumber 
\end{equation}
Thus we see that in the above sense the Pauli group can be uniquely characterised: for $n=2^s$, it is the smallest \textit{non-abelian} group that occurs as a \textit{proper} subgroup of a $B_n=Hol(C_n)$ in its role as Galois group of a pure polynomial, when $X^n+c$ does not any more fulfil the Schinzel condition for abelian Galois groups, for a given fixed $c$, and $Gal(X^{n/2}+c)$ abelian (where $s>2$, removing the trivial irreducible case $n=4$: all $Gal(X^{n/2}+c) \cong C_2$ for any $c\ne -d^2$. Here the non-abelian $D_8=Dih_4$ occurs as Galois group of all  $X^4+c, c\ne \pm d^2$ and is \textit{identical} to $B_4=Hol(C_4)= C_4 \rtimes C_2$). \newline 
For $s=3, n=8: c=k^2\ne d^4; k,d\in \mathbb{Q}^*$ always leads to the Galois group $P$, whereas  for $s=4, n=16: c=k^4\ne d^8$ gives rise to different Galois groups: 
to $M_4(2)$, e.g. for $c=2^4,2^{12}$, or to the order 32 group $G_{32}^{(38)} = C_8 \circ P$, e.g. for $c=3^4,5^4$ \cite{bib25}. 

\section{$P$-related algebras and quadratic forms}\label{sec5}

Our focus has been to determine the simplest possible 
polynomials in $\mathbb{Q}[X]$ (of given minimal degree 8) with Galois group $P$, thus providing an explicit and  constructive  realisation, and not just an existence proof, of a solution of the inverse Galois problem in this case. We were able to prove that the Pauli group $P$, unlike $Q_8$, can be realised by certain irreducible pure polynomials of the form $X^8+k^2$ over $\mathbb{Q}$. \newline 

We would like to set this into another context by noting that, during the last decades, several authors, including Grundman, Minác, Smith et al \cite{bib11},\cite{bib15}, and Jensen, Ledet, Yui \cite{bib10} have addressed
(i) the realizability, and 
(ii) an "extended" realisation that lends itself to a constructive solution of the inverse Galois problem,
respectively, of the Pauli group (and certain other groups)
as Galois extension over any field $K$ with characteristic not equal to $2$.
Note that Jensen et al use $QC$ for the Pauli group, alluding to 
$P\cong Q_8 \circ C_4 \cong Q\circ C \cong D_8\circ C_4$ (whereas Grundman et al call it $DC$). They obtained the following results:\newline 

ad (i): The realizability over $K$ of these groups, and of the Pauli group $P$ in particular, as a Galois group can be characterized by a relation between 
quaternion algebras in $Br(K)$: 

A necessary  and sufficient condition for the realizability of $P$ over $K$ is 
the existence of three linearly independent $A,B,C \in K^*/(K^*)^2$ such that in $Br(K)$:
\begin{equation}
    (A,B)(C,-1) = 1 \nonumber
\end{equation}
This $(A,B)=(C,C)$ is also equivalent to a manifestly $A',B',C'$-symmetric condition in $Br(K)$:
\begin{equation}
    (A',B')(A',C')(B',C')= 1 \nonumber
\end{equation}
 via the substitution $A'=AC, B'=BC, C'=C$ \cite{bib15}. By another possible choice, $A"=A, B"=AB, C"=C$, one obtains the (even $A",B"$-asymmetric) equivalent condition 
 $(A",A"B") = (C",C")$
in $Br(K)$, used by Ledet and coauthors. Here, as an example and a hint for later caution, the triplet $A"=k, B"=-1, C"=+2$ satisfies this latter condition over $K=\mathbb{Q}$, proving the realizability of a $P$-extension  $E/\mathbb{Q}$ that embeds 
our given 
$L=\mathbb{Q}(\sqrt{k},i,\sqrt{2}), [E:L]=2$, while none of the members of the triplet (let alone $C"=+2$) gives rise to a $Q_8$-extension (E/$\mathbb{Q}(\sqrt{C"})$).
\newline 
Equivalently, now staying with our case $K=\mathbb{Q}$, we state that
a necessary  and sufficient condition is to require:
$(a,a)(b,b)(c,c)(a,b)= 1$ in $Br(\mathbb{Q})$,
where again the triquadratic $E_8$-extension $L/\mathbb{Q}=\mathbb{Q}(\sqrt{a},\sqrt{b},\sqrt{c})/\mathbb{Q}$ embeds into a Pauli-extension $E/\mathbb{Q}$, with 
three quadratically independent $a,b,c \in \mathbb{Q}^*$.
By the standard rules \cite{bib17}, this is  equivalent to:
\begin{equation}
(abc,-1) = (a,b)
\end{equation}
in $Br(\mathbb{Q})$,
which in turn is equivalent to the condition $(A,B)=(C,C)$ above, by substituting $C=abc,A=a, B=b$, ie. by one of the choices that can freely be made, as long as $C$ need not be the fixed root that leads to a $Q_8$ extension $E/\mathbb{Q}(\sqrt{C})$.\newline

As is well known, any condition like (14) in $Br(\mathbb{Q})$ on two quaternion algebras $(u,v), (r,s)$ is equivalent to one on corresponding non-degenerate ternary quadratic forms over the same ground field, since the former two are algebra-isomorphic if and only if the latter two are similar \cite{bib16}:
\[(u,v) = (r,s) \Leftrightarrow  [-u,-v, uv]\cong [-r,-s,rs]\]

and thus (14) can be expressed
(since $(abc,-1) = (-a,-1)(+b,-1)(-c,-1) = (a,b)= (-a,b)(-1,b) = (-a,-b)(-a,-1)(-1,b)$, and hence after eliminating the last two algebras: $(-c,-1)=(-a,-b)$)
as the  equivalence over $\mathbb{Q}$ of the two forms
\begin{equation}
 [a,b,ab]\cong [1,c,c]
\end{equation}

which is the condition given in \cite{bib10} by Jensen et al.: 
If and only if a suitable triplet $a,b,c$ can be found that generates a given triquadratic extension $L/\mathbb{Q}$ and satisfies (15), 
$P$ can be realized over $\mathbb{Q}$ and embeds $L$.
\newline 
The left side of this constraint on $[a,b,ab]$ echoes the Witt-condition \cite{bib7}
\[(a,a)(a,b)(b,b)=1 \Leftrightarrow [a,b,ab]\cong [1,1,1]\]
for the embedability of a $V_4$- into a $Q_8$-extension (equivalences over $\mathbb{Q}(\sqrt{c})$ here!). Not every choice for (or permutation of) $a,b,c$ will necessarily satisfy this condition and (15) simultaneously, unless it is $c$ that leads to a $Q_8$-extension over $\mathbb{Q}(\sqrt{c})$.  \newline 

ad (ii): For an "extended" realisation of $P$,
in \cite{bib10} the additional claim is made that $E/\mathbb{Q}(\sqrt{c})$ be the $Q_8$ quaternion extension which embeds into the Pauli extension $P\cong Gal(E/\mathbb{Q})$ above, while $Gal(L/\mathbb{Q}) \cong E_8 = C_2^3$.
This introduces an explicit asymmetry between  the roles of $a,b,c$ 
(while possibly retaining symmetry between $a,b$). This explicit asymmetry is different in character to the one due to the "hidden $A',B',C'$-symmetry", hidden by the chosen substitutions, that we saw in the many equivalent  conditions for the "mere" realizability in (i).
\newline

Theorem 6.2.1 on p.143 in \cite{bib10} for $K=\mathbb{Q}$ states that condition (15) 
is necessary and sufficient
for the embedability of an $E_8$-extension $L/\mathbb{Q}=\mathbb{Q}(\sqrt{a},\sqrt{b},\sqrt{c})/\mathbb{Q}$ into a Pauli-extension $E/\mathbb{Q}$ such  that $E/\mathbb{Q}(\sqrt{c})$ is a $Q_8$ quaternion extension.
Now the hidden symmetry between $c$ and $a,b$ in (15) 
might seem to take care of this new explicit asymmetry, 
but in fact it does not.

Thus we would like to alert the reader to a subtle point: 
the characterisation (15) is formally only correct if one attaches to it an additional interpretation. The need for it can be proved as follows:
the choice of $a,b,c$ for 
$L/\mathbb{Q}=\mathbb{Q}(\sqrt{a},\sqrt{b},\sqrt{c})/\mathbb{Q}$ being generated by  
$a=-1,b=k, c=-2$ leads to two isotropic forms satisfying (15), whereas the equally possible choice
 $a'=2,b=k,c=-2$, giving the identical 
 $L =\mathbb{Q}(i,\sqrt{k},ir) = \mathbb{Q}(r,\sqrt{k},ir)$
 leads to a positive definite form $[a',b,a'b]$ and thus cannot possibly satisfy (15) ie. be similar to the isotropic form $[1,-2,-2]$ over $\mathbb{Q}$, albeit the fact that the correct choice of $c=-2$ does give rise to $Q_8$.  \newline
Hence a required interpretation of (15), layered into two steps, is the following:

1. The freedom to choose a triplet $a,b,c$ to yield a given $L/\mathbb{Q}$ cannot prevail if one wants to apply (15) in the context (ii) of theorem 6.2.1. Rather, it says only that, given a triquadratic extension $L/\mathbb{Q}$, if some three quadratically independent $a,b,c \in \mathbb{Q}^*$ can be found that yield  the $E_8$-extension $L/\mathbb{Q}$ and satisfy (15), then this is sufficient to ensure the realizability of $P$, and hence also the embedability of its normal subgroup 
$Q_8\cong Gal(E/R)$ for some quadratic extension $R/\mathbb{Q}$
into this Pauli extension $E/\mathbb{Q}$, via the given $L$.

2. By no means is it guaranteed, with  $c$ determined by $R=\mathbb{Q}(\sqrt{c})$ as fixfield to $Q_8$, that all choices of $a,b$ congruent with the given $L/\mathbb{Q}$ lead to (15) being satisfied, even while $E/R$ is simultaneously a $Q_8$ quaternion extension over $R$ (as the Witt condition after (15) might hold over $R$, e.g. $[a'=2,k,2k]\cong [1,1,1]$ over $\mathbb{Q}(\sqrt{-2})$, even if (15) fails over $\mathbb{Q}$). 
It is only possible to state that in $L$\textbackslash $\mathbb{Q}$ we can find  factors among the triplet $\sqrt{a},\sqrt{b},\sqrt{c},$ by which the radicals given by $a,b$ can be multiplied, so that the new triplet (modulo squares in $\mathbb{Q}$) does satisfy (15), which thus reveals itself as a sufficient but not a neccesary condition for the "extended" embedding.\newline

It may be possible to recover a full equivalence, given $L/\mathbb{Q}$, by defining the set $S_L=\{a,b,ab,c,ac,bc,abc\}$. 
Then the existence of at least one triplet $(u,v,x)$ of pairwise different, quadratically independent elements drawn from $S_L$, that satisfies (15), $[u,v,uv] \cong [1,x,x]$, seems necessary and sufficient for the "extended" embedding of $L$ into $E/\mathbb{Q}$, with  $Q_8\cong Gal(E/\mathbb{Q}(\sqrt{x}))$ and  $P\cong Gal(E/\mathbb{Q})$.\newline
 
Thus prepared, one can turn to the challenge of constructing explicit 
"generic" polynomials 
over a function field 
$K(t)$ with $char K\neq 2$,
which give rise to a given Galois extension.
This has been achieved, among other cases for small degree $n$, for the Pauli group in \cite{bib10}.
Our family of pure polynomials $f_k$ over $K=\mathbb{Q}$ then can  be seen as equivalent to a specialisation of some generic polynomial over $K(t)$. However, the fact that a polynomial as simple as the pure $f_k = X^8 + k^2$ can result is not visible from the complicated parametric form of the generic polynomials, usually given in powers of $X^2-d$ for some $d\in \mathbb{Q}^*$. \newline

Another approach \cite{bib19} via polynomials in $\mathbb{Q}(t)[X]$ yields for the Pauli group:\newline
$f_{8T11}(t,X)
= X^8+8X^6+4(4t^2-11)X^4+8(t^2-3)(t^2-2)X^2+t^2(t^2-3)^2$, with $t\in \mathbb{Q}^*$, $P=8T11$,
none of which specialises to a pure polynomial. 
Over $\mathbb{Q}$, Malle and Matzat give as an example polynomial:
$f_{8T11}(X) = X^8-X^5-2X^4+4X^2+X+1$.
\newline

On the other hand, it was recently shown by Chen, Chin, Tan \cite{bib21}
with an elaborate method using linear resolvents, Mathematica and an examination of all conjugacy classes of subgroups of $S_8$, 
that the Galois groups over $\mathbb{Q}$ of doubly even octic polynomials $X^8 + aX^4 + b, b=k^2$ a square in $\mathbb{Q}$, are of one of six isomorphism types:

$K_8, E_8, D_8, C_2 \times D_8, P=8T11 \text{  and the extraspecial group  } G_{32}^{(49)}= 2_+^{1+4}$, \newline
ie. among them $K_8$ and $P$ that we found for certain cases with $a=0$
(but not $Q_8$, see our earlier remark in section 4). 
$P$ occurs for certain conditions on $a,k\in \mathbb{Q}, 
k \notin \mathbb{Q}^2$ that include (1) in the case $a=0$. However, the case of non-square $b$ was not in the scope of their investigation, which is complementary to ours into pure polnomials with any constant term. \newpage





\section{Appendix: Pauli group definitions}\label{sec6}
As generators of the rank $r=3$ group $P$, the Pauli  matrices $X,Y,Z$ give the equivalent group presentations (with neutral element $E$): 
\begin{equation}
P=\langle A,X,C | A^4=X^2=E, XAX=A^{-1}, A^2=C^2, AC=CA, XC=CX \rangle \nonumber \end{equation}
\begin{equation}
=\langle A,J,C | A^2=J^2=C^2=-E, JA=-AJ, AC=CA, JC=CJ \rangle \nonumber
\end{equation}

\begin{equation}
=\langle X,Y,Z | X^2=Y^2=Z^2 = E, (YZ)^4=(ZX)^4=(XY)^4=E \rangle \nonumber
\end{equation} \newline
where $A = ZX = iY$, $C = XYZ = iE$ (called the Chiral element, with $C^2=-E$ as central involution), $J:=CX=(XYZ)X=iX$ (compare $A^X = XAX=A^{-1}=A^3=-A$ to the semidirect product in chapter 2),
showing explicitly the relation to the common presentations of:
\begin{equation}
 D_8 = \langle A,X | A^4=X^2=E, XAX=A^{-1}\rangle  =\langle iY,X\rangle\nonumber 
\end{equation}
\begin{equation}
 Q_8 = \langle A,J | A^4=E, A^2=J^2, JAJ^{-1}=A^{-1}\rangle =\langle iY,iX\rangle \nonumber
\end{equation}\newline
 With $K=iZ$, the rank 2 group $Q_8$ can also be presented by three order-4 generator elements: 
 $Q_8 = \langle A,J,K |  A^2=J^2=K^2=AJK \rangle $. Here $A=iY$ was used instead of $I$ to show the relation to $D_8$ and to avoid confusion with the complex $i$ or identity $Id$. \newline
 
Elements of $P$ thus have orders 1,2 or 4 only, and its center $Z(P) = <C> \cong C_4$ consists of the   order-2 element $-E=[X,Y]=C^2$ and the two order-4 elements $C=XYZ=iE$ and $-C=[X,Y]XYZ=-iE$, correcting the $Z(P) \cong C_2$ in \cite{bib10}. 

All its non-trivial quotients are elementary abelian: $C_2^k=E_{2^k}, k=1,2,3$.
  
Its group commutators, e.g. $[X,Y] = XYX^{-1}Y^{-1} = -E$, have to be distinguished from those (in a $d=2$-dimensional representation $\rho$ over $\mathbb{R}$) of the (real) Lie-algebra $\rho(su_{N}(\mathbb{C})) < M_d(\mathbb{C}), N=2$ (where they anticommute): $[iX,iY]_{Lie} = -2iZ$, with the $r=N^2-1=3$ Pauli matrices (times $i$) spanning the real representation of $su_2(\mathbb{C})$. \newline

The Pauli group can be constructed as a group extension in one of several ways \cite{bib5}:\newline

- as a split extension
$1 \longrightarrow Q_8 \longrightarrow P \longrightarrow C_2 \longrightarrow 1$ of $Q_8$ acted upon non-trivially by $C_2$, 
ie. as the semidirect product described above,
$P = Q_8 \rtimes_2 C_2$. It shares this property with the quasidihedral group $QD_{16}$. However, the maps describing the non-trivial product are different. 
\newline

- as a central non-split (stem) extension 
$1 \longrightarrow C_2 \longrightarrow P \longrightarrow E_8 \longrightarrow 1$ 
of $C_2$ acted upon trivially by $E_8 = C_2^3$. 
This realisation lends itself to the Pauli group embedding problem in inverse Galois theory, as in section 5, where $P/C_2 \cong E_8$, and the triquadratic extension $L/\mathbb{Q}$ is given, to be embedded into $E/\mathbb{Q}$. \newline

- as the (unique) central product, ie. built on a common central "overlap" subgroup $C_2=Z(Q_8)$ between $C_4$ and $Q_8$ (alternatively between $C_4$ and $D_8 = C_4 \rtimes C_2$, see the comment on the presentations above). As such it is in fact the smallest non-trivial central product among the finite groups, and the only one with order less than 24:\newline 
$P\cong C_4 \circ Q_8 \cong (C_4 \times Q_8)/C_2  \cong C_4 \circ D_8$, called push-out  of $C_4$ and $D_8$  by S. Lang.

As a note of caution, we alert the reader to the fact that some investigations use the name (generalised)   Pauligroups, $P_d$, to describe discretisations of the Heisenberg-Weyl (Lie) group $H(\mathbb{R})$, not via the infinite ring $\mathbb{Z}$ but via the finite rings $\mathbb{Z}_d =\mathbb{Z}/d\mathbb{Z}$ with $2 \leq d \in \mathbb{N}$, including cases with prime (power) $d=p^r, r\geq 1$ and Galois fields $\mathbb{F}_{p^r}$. These $P_d$ are  finite non-abelian groups of order $d^3$, hence none of them can be the classical order-16 group of our concern. Quite on the contrary, the simplest variant, $P_2$ in $d=2$ is isomorphic to the order-8 group $D_8=C_4 \rtimes C_2$. 

Other authors use the name Pauli group $P'_2$ for the classical order-8 group of quaternions,  $Q_8 \cong \{\pm E, \pm iX, \pm iY, \pm iZ\}=\langle iX,iY\rangle$, and refer to the classical Pauli group $P$ as a doubling extension of $P'_2$, alluding to $P=Q_8 \rtimes _2 C_2$. Care has to be taken when constructing tensor products of these generalised Pauli groups, where the cases $char(\mathbb{Z}_d) =2$ have to be treated separately. Yet in other contexts we have seen the label Pauli group used for the order-4, Klein  group $V_4$, referring to the "classical $P$ modulo any global phase", obtained by  quotienting out the center $Z(P) \cong \langle iE \rangle \cong C_4$ to yield 
$P/Z(P) \cong V_4 = C_2^2 = E_4$, and in general an elementary abelian group $E_{d^2}$ of order $d^2$. 

\end{document}